\renewcommand{\arraystretch}{1.5}
\def\NN{\mathbb N}
\def\DD{\mathcal D}
\def\RR{\mathbb R}
\def\HH{\mathcal H}
\def\xx{\mathbf{x}}
\def\yy{\mathbf{y}}
\def\zz{\mathbf{z}}
\newcommand{\NT}{\mathcal{N}_{T_\nu}}
\newcommand{\NL}{\mathcal{N}_{L_\nu}}
\newcommand{\ga}{g_\lambda}
\newcommand{\ra}{r_\lambda}
\newcommand{\lh}{\mathcal{L}(\HH)}
\newcommand{\la}{\lambda}
\newcommand{\fz}{f_{\mathbf{z},\la}}
\newcommand{\uz}{u_{\mathbf{z},\la}}
\newcommand{\fp}{f_\rho}
\newcommand{\up}{u_\rho}
\newcommand{\fr}{f_\rho^R}
\newcommand{\gp}{g_\rho}
\newcommand{\LL}{\mathscr{L}^2(X,\nu;Y)}
\newcommand{\sx}{S_\xx}
\newcommand{\lx}{L_\xx}
\newcommand{\bx}{B_\xx}
\newcommand{\tx}{T_\xx}
\newcommand{\ip}{I_\nu}
\newcommand{\lp}{L_\nu}
\newcommand{\bp}{B_\nu}
\newcommand{\tp}{T_\nu}
\newcommand{\op}[1]{\operatorname{#1}}
\newcommand{\tsup}{\sup\limits_{t\in[0,\kappa^2]}}
\newcommand{\argmin}{\operatornamewithlimits{argmin}}
\newcommand{\paren}[1]{\left(#1\right)}
\newcommand{\brac}[1]{\left\{#1\right\}}
\newcommand{\sbrac}[1]{\left[#1\right]}
\newcommand{\norm}[2]{\left\|{#1}\right\|_{#2}}
\newcommand{\scalar}[3]{\langle{ #1},{#2} \rangle_{#3}}
\newcommand{\abs}[1]{\left\lvert #1 \right\rvert}
\newcommand{\tr}{\operatorname{tr}}
\newcommand{\range}{\mathcal R}
\newtheorem{theorem}{Theorem}[section]
\newtheorem{lemma}[theorem]{Lemma}
\newtheorem{corollary}[theorem]{Corollary}
\newtheorem{proposition}[theorem]{Proposition}
\theoremstyle{definition}
\newtheorem{definition}[theorem]{Definition}
\newtheorem{assumption}{Assumption}
\theoremstyle{remark}
\newtheorem{remark}[theorem]{Remark}
\newtheorem{example}[theorem]{Example}
\begin{document}
\title{Inverse learning in Hilbert scales}  

\author{Abhishake Rastogi}
\address{Institute of Mathematics, University of Potsdam, Karl-Liebknecht-Strasse 24-25, 14476 Potsdam, Germany} 
\email{abhishake@uni-potsdam.de}

\author{Peter Math{\'e}}
\address{Weierstrass Institute for Applied Analysis and Stochastics, Mohrenstrasse 39, 10117 Berlin, Germany}  
\email{peter.mathe@wias-berlin.de}

\date{}

\thanks{This research has been partially funded by Deutsche Forschungsgemeinschaft (DFG) - SFB1294/1 - 318763901.}  

\keywords{Statistical inverse problem; Spectral regularization; Hilbert Scales; Reproducing kernel Hilbert space; Minimax convergence rates.}

\subjclass[2010]{Primary: 62G20; Secondary: 62G08, 65J15, 65J20, 65J22.}

\begin{abstract}
We study the linear ill-posed inverse problem with noisy data in the statistical learning setting. Approximate reconstructions from random noisy data are sought with general regularization schemes in Hilbert scale. We discuss the rates of convergence for the regularized solution under the prior assumptions and a certain link condition. We express the error in terms of  certain distance functions. For regression functions with smoothness given in terms of source conditions the error bound can then be explicitly established.  
\end{abstract}

\maketitle
\section{Introduction}\label{Sec:Introduction}
Let~$A$ be a linear injective operator between the infinite-dimensional Hilbert spaces~$\HH$ and~$\HH'$ with the inner products~$\scalar{\cdot}{\cdot}{\HH}$ and~$\scalar{\cdot}{\cdot}{\HH'}$, respectively. Let~$\HH'$ be the space of functions between a Polish space~$X$ and a real separable Hilbert space~$Y$. Here we study the linear ill-posed operator problems governed by the operator equation
\begin{equation}\label{Model}
  A(f) = g,\qquad\text{for}\quad f\in \HH \quad \text{and} \quad g\in\HH'.
\end{equation}

We observe noisy values of~$g$ at some points, and the foremost objective is to estimate the true solution~$f$. The problem of interest can be described as follows: Given data~$\brac{(x_i,y_i)}_{i=1}^m$ under the model
\begin{equation}
  y_i= g(x_i)+\varepsilon_i,\quad  i=1,\ldots,m,
\end{equation} 
where~$\varepsilon_i$ is the observational noise, and~$m$ denotes the sample size, determine (approximately) the underlying element~$f\in\HH$ with~$g:= A(f)$ being the regression function.

For classical inverse problems, the observational noise is assumed to be deterministic. Here we assume that the random observations~$\brac{(x_i,y_i)}_{i=1}^m$ are independent and follow some unknown probability distribution~$\rho$, defined on the sample space~$Z=X\times Y$, and hence we are in the context of statistical inverse problems.

The reconstruction of the unknown true solution will be based on spectral regularization schemes. Various schemes can be used to stably estimate the true solution. Tikhonov regularization is widely-considered in the literature. This scheme consists of the error term measuring the fitness of the data and a penalty term, controlling the complexity of the reconstruction. In this study we enforce smoothness of the approximated solution by introducing an unbounded, linear, self-adjoint, strictly positive operator~$L: \DD(L) \subset \HH \to \HH$ with a dense domain of definition~$\DD(L) \subset \HH$, and then we define \emph{Tikhonov regularization scheme in Hilbert scales} as follows:
\begin{equation}\label{Tikhonov}
  \argmin\limits_{f\in\DD(L)}\brac{\frac{1}{m}\sum\limits_{i=1}^m\norm{ A(f)(x_i)-y_i}{Y}^2+\la\norm{L f}{\HH}^2},
\end{equation}
where~$\la$ is a positive regularization parameter and the operator~$L$ influences the properties of the approximated solution. Standard Tikhonov regularization corresponds to~$L:= Id\colon \HH \to \HH$, the identity mapping. In many practical problems, the operator~$L$ is chosen to be a differential operator in some appropriate function spaces, e.g.,~$\mathscr{L}^2$-spaces.

Notice from~(\ref{Tikhonov}), that the reconstruction~$\fz$ belongs to~$\DD(L)$, such that formally we may introduce~$\uz:= L \fz\in\HH$. In the regular case, when~$\fp \in \DD(L)$, then we let~$\up:= L\fp\in\HH$. With this notation we can rewrite~(\ref{Model}) as
$$
g = A f = A L^{-1} u,\quad u\in\DD(L).
$$
Also, the Tikhonov minimization problem would reduce to the standard one
$$
\argmin\limits_{u}\brac{\frac{1}{m}\sum\limits_{i=1}^m\norm{
    (AL^{-1})(u)(x_i)-y_i}{Y}^2+\la\norm{u}{\HH}^2},
$$
albeit for a different operator~$A L^{-1}$.  Accordingly, the error bounds relate as
$$
\norm{ \fp - \fz}{\HH} = \norm{L^{-1}(\up - \uz)}{\HH}.
$$
Therefore, error bounds for~$\up - \uz$ in the weak norm, in~$\HH_{-1}$, yield bounds for~$\fp - \fz$. The latter bounds are not known from previous studies. Also, we are interested in the oversmoothing case, when~$\fp \not\in\DD(L)$, such that we provide a detailed error analysis, here. However, the above relation will implicitly be utilized in the subsequent proofs.
 
We review literature related to the considered problem. Regularization schemes in Hilbert scales are widely considered in classical inverse problems (with deterministic noise), starting from F. Natterer~\cite{Natterer1984}, and continued in~\cite{Bottcher2006,Mair1994,Mathe2006,Mathe2007,Nair1999,Nair2002,Nair2005,Neubauer1988,Tautenhahn1996}. G. Blanchard and N.~M\"ucke~\cite{Blanchard} considered general regularization schemes for linear inverse problems in statistical learning and provided (upper and lower) rates of convergence under H{\"o}lder type source conditions. Here we consider general (spectral) regularization schemes in Hilbert scales for the statistical inverse problems.  We discuss rates of convergence for general regularization under certain noise conditions, approximate source conditions, and a specific link condition between the operators~$A$, governing the equation~(\ref{Model}), and the smoothness promoting operator~$L$ as used e.g. in~(\ref{Tikhonov}). We study error estimates by using the concept of reproducing kernel Hilbert spaces. The concept of the effective dimension plays an important role in the convergence analysis.

The key-points in our results can be described as follows:
\begin{itemize}
\item We do not restrict ourselves to the white or coloured Hilbertian noise. We consider general centered noise, satisfying certain moment conditions, see Assumption~\ref{Tikhonov}.

\item We consider general regularization schemes in Hilbert scales. It is well-known that Tikhonov regularization suffers the saturation effect. On the contrary, this saturation is delayed for Tikhonov regularization in Hilbert scales.

\item The analysis uses the concept of link conditions, see Assumption~\ref{ass:link}, required to transfer information in terms of properties of the operator~$L$ to the covariance operator.

\item We analyze the \emph{regular case}, i.e.,\ when the true solution belongs to the domain of operator~$L$.

\item We also focus on the \emph{oversmoothing case}, when the true solution does not belong to the domain of operator~$L$.
\end{itemize}

The paper is organized as follows. The basic definitions, assumptions, and notation required in our analysis are presented in Section~\ref{Sec-Notation}. In Section~\ref{Sec-analysis} we discuss the bounds of the reconstruction error in the direct learning setting and inverse problem setting by means of distance functions. This section comprises of two main results: The first result is devoted to convergence rates in the oversmoothing case,  while the second result focuses on the regular case. When specifying smoothness in terms of source conditions we can bound the distance functions, and this gives rise to convergence rates in terms of the sample size~$m$. This program is performed in Section~\ref{sec:smoothness}. In case that both, the smoothness as well as the link condition are of power type we establish the optimality of the obtained error bounds in the regular case in Section~\ref{sec:optimality}. In the Appendix, we present probabilistic estimates which provide the tools to obtain the error bounds. 

\section{Notation and Assumptions}\label{Sec-Notation}
In this section, we introduce some basic concepts, definitions, notation, and assumptions required in our analysis. 

We assume that~$X$ is a Polish space, therefore the probability distribution~$\rho$ allows for a disintegration as
\begin{equation*}
  \rho(x,y)=\rho(y|x)\nu(x),
\end{equation*}
where~$\rho(y|x)$ is the conditional probability distribution of~$y$ given~$x$, and~$\nu(x)$ is the marginal probability distribution. We consider random observations~$\brac{(x_i,y_i)}_{i=1}^m$ which follow the model~$y= A(f)(x)+\varepsilon$ with centered noise~$\varepsilon$. We assume throughout the paper that the operator~$A$ is injective. 
\begin{assumption}[The true solution]\label{fp}
The conditional expectation w.r.t.~$\rho$ of~$y$ given~$x$ exists (a.s.), and there exists~$\fp \in \HH~$ such that
  \begin{equation*}
\int_Y y d\rho(y|x)= \gp(x) = A(\fp)(x), \text{ for all } x\in X.
  \end{equation*}
\end{assumption} 

The element~$\fp$ is the true solution which we aim at estimating.

\begin{assumption}[Noise condition]\label{noise.cond}
There exist some constants~$M,\Sigma$ such that for almost all~$x\in X$,
\begin{equation*}
\int_Y\left(e^{\norm{y-A(\fp)(x)}{Y}/M}-\frac{\norm{y-A(\fp)(x)}{Y}}{M}-1\right)d\rho(y|x)\leq\frac{\Sigma^2}{2M^2}.
\end{equation*}
\end{assumption}

This assumption is usually referred to as a \emph{Bernstein-type assumption}.

We return to the unbounded operator~$L$. By spectral theory, the operator~$L^s : \DD(L^s) \to \HH$ is well-defined for~$s \in \RR$, and the spaces~$\HH_s := \DD(L^s), s \geq 0$ equipped with the inner product~$\scalar{f}{g}{\HH_s}=\scalar{ L^s f}{L^s g}{\HH},\quad f, g \in\HH_s$ are Hilbert spaces. For~$s < 0$, the space~$\HH_s$ is defined as completion of~$\HH$ under the norm~$\norm{f}s := \scalar{f}{f}{s}^{1/2}$. The space~$(\HH_s)~s\in\RR$ is called the Hilbert scale induced by~$L$. The following interpolation inequality is an important tool for the analysis:
\begin{equation}\label{interpolation}
  \norm{f}{\HH_r}\leq\norm{f}{\HH_t}^{\frac{s-r}{s-t}}\norm{f}{\HH_s}^{\frac{r-t}{s-t}},\qquad  f\in \HH_s, 
\end{equation}
which holds for any~$t < r < s$~\cite[Chapt.~8]{Engl}.

\subsection{Reproducing Kernel Hilbert space and related operators}
\label{sec:repr}
We start with the concept of reproducing kernel Hilbert spaces. It is a subspace of~$\LL$ (the space of square-integrable functions from~$X$ to~$Y$ with respect to the probability distribution~$\nu$) which can be characterized by a symmetric, positive semidefinite kernel and each of its functions satisfies the reproducing property. Here we discuss the vector-valued reproducing kernel Hilbert spaces, following~\cite{Micchelli1}, which are the generalization of real-valued reproducing kernel Hilbert spaces~\cite{Aronszajn}.

\begin{definition}[Vector-valued reproducing kernel Hilbert space]
For a non-empty set~$X$ and a real separable Hilbert space~$(Y,\scalar{\cdot}{\cdot}{Y})$, a Hilbert space~$\HH$ of functions from~$X$ to~$Y$ is said to be the vector-valued reproducing kernel Hilbert space, if the linear functional~$F_{x,y}:\HH \to \RR$, defined by
$$F_{x,y}(f)=\scalar{ y}{f(x)}{Y} \qquad \forall f \in \HH,$$
is continuous for every~$x \in X$ and~$y\in Y$.
\end{definition}

\begin{definition}[Operator-valued positive semi-definite kernel]
Suppose~$\mathcal{L}(Y):Y\to Y$ is the Banach space of bounded linear operators. A function~$K:X\times X\to \mathcal{L}(Y)$ is said to be an operator-valued positive semi-definite kernel if
\begin{enumerate}[(i)]

\item~$K(x,x')^*=K(x',x) \qquad\forall~x,x'\in X.$
  
\item~$\sum\limits_{i,j=1}^N\scalar{ y_i}{K(x_i,x_j)y_j}{Y}\geq 0 \qquad\forall~\{x_i\}_{i=1}^N\subset X \text{ and } \{y_i\}_{i=1}^N\subset Y.$

\end{enumerate}
\end{definition}

For a given operator-valued positive semi-definite kernel~$K:X \times X \to \mathcal{L}(Y)$, we can construct a unique vector-valued reproducing kernel Hilbert space~$(\HH,\scalar{\cdot}{\cdot}{\HH})$ of functions from~$X$ to~$Y$ as follows:
\begin{enumerate}[(i)]
\item We define the linear function
  \[
    K_x: Y \rightarrow \HH: y \mapsto K_xy,
  \]
  where~$K_xy:X \to Y:x' \mapsto (K_xy)(x')=K(x',x)y$ for~$x,x'\in X$ and~$y\in Y$.
\item The span of the set~$\{K_xy:x\in X, y\in Y\}$ is dense in~$\HH$.
\item \textbf{Reproducing property:}
  \[\scalar{f(x)}{y}{Y}=\scalar{ f}{K_xy}{\HH},\qquad x\in  X,~y \in Y,~\forall~f\in \HH,\] in other words~$f(x) = K_x^* f$.
\end{enumerate}

Moreover, there is a one-to-one correspondence between operator-valued positive semi-definite kernels and vector-valued reproducing kernel Hilbert spaces, see~\cite{Micchelli1}.
  
We assume the following assumption concerning the Hilbert space~$\HH'$:
\begin{assumption} \label{assmpt1} The space~$\HH'$ is assumed to be a vector-valued reproducing kernel Hilbert space of functions~$f:X\to Y$ corresponding to the kernel~$K:X\times X\to \mathcal{L}(Y)$ such that
\begin{enumerate}[(i)]
  \item~$K_x:Y\to\HH'$ is a Hilbert-Schmidt operator for~$x\in X$ with
    \[\kappa'^2:=\sup_{x \in X} \norm{K_x}{HS}^2 = {\sup_{x \in
          X}\tr(K_x^*K_x)}<\infty.\]
  \item For~$y,y'\in Y$, the real-valued function~$\varsigma:X\times X \to \RR:(x,x')\mapsto\scalar{ K_{x}y}{K_{x'}y'}{\HH'}$ is measurable.
\end{enumerate}
\end{assumption}

\begin{example}
In case that the set~$Y$ is a bounded subset of~$\RR$ then the reproducing kernel Hilbert space becomes real-valued reproducing kernel Hilbert space. The corresponding kernel becomes the symmetric, positive semi-definite~$K:X \times X \to \RR$ with the reproducing property~$f(x)=\scalar{ f}{K_x}{\HH}$.  Also, in this case the Assumption~\ref{assmpt1} simplifies to the condition that the kernel is measurable and~$\kappa'^2:=\sup_{x \in X} \norm{K_x}{\HH'}^2=\sup_{x \in X}K(x,x)<\infty$. 
\end{example}
 
Now we introduce some relevant operators used in the convergence analysis. We introduce the notation for the vectors~$\xx=(x_1,\ldots,x_m)$,~$\yy=(y_1,\ldots,y_m)$,~$\zz=(z_1,\ldots,z_m)$. The product Hilbert space~$Y^m$ is equipped with the inner product~$\scalar{\yy}{\yy'}{m} = \frac{1}{m}\sum_{i=1}^m \scalar{y_i}{y'_i}{Y},$ and the corresponding norm~$\norm{\yy}{m}^2=\frac{1}{m}\sum_{i=1}^m\norm{y_i}{Y}^2$. We define the {\it sampling operator}~$\sx:\HH' \to Y^m:g\mapsto(g(x_1),\ldots,g(x_m))$, then the adjoint~$\sx^*:Y^m\to\HH'$ is given by
$$\sx^*\yy=\frac{1}{m}\sum_{i=1}^m K_{x_i} y_i.$$  

Let~$\ip$ denotes the canonical injection map~$\HH' \to \LL$. Then we observe that, under Assumption~\ref{assmpt1},  both the operators~$\sx$ and~$\ip$ are bounded by~$\kappa'$, since
\[\norm{\ip f}{\LL}^2=\int_X\norm{f(x)}{Y}^2d\nu(x) =\int_X\norm{K_x^*f}{Y}^2d\nu(x) \leq \kappa'^2\norm{f}{\HH}^2
\]
and
\[ \norm{\sx f}{m}^2 =\frac{1}{m}\sum_{i=1}^m\norm{f(x_i)}{Y}^2 =\frac{1}{m}\sum_{i=1}^m\norm{K_{x_i}^*f}{Y}^2 \leq
  \kappa'^2\norm{f}{\HH}^2.\]

We denote the population operators~$\bp := \ip AL^{-1}:\HH\to\LL$,~$\tp:= \bp^*\bp:\HH\to\HH$,~$\lp:= A^*\ip^*\ip A:\HH\to\HH$, and their empirical versions~$\bx=\sx A L^{-1}:\HH\to Y^m$,~$\tx=\bx^*\bx:\HH\to\HH$,~$\lx=A^*\sx^*\sx A:\HH\to\HH$. The operators~$\tp$,~$\tx$,~$\lp$,~$\lx$ are positive, self-adjoint and depend on the kernel. Under Assumption~\ref{assmpt1}, the operators~$\bx$,~$\bp$ are bounded by~$\kappa:=\kappa' \norm{AL^{-1}}{\HH\to\HH'}$ and the operators~$\lx$,~$\lp$ are bounded by~$\tilde{\kappa}^2$ for~$\tilde{\kappa}:=\kappa' \norm{A}{\HH\to\HH'}$,  i.e.,~$\norm{\bx}{\HH \to Y^m}\leq\kappa$,~$\norm{\bp}{\HH \to \LL}\leq\kappa$,~$\norm{\lx}{\lh}\leq\kappa^2$ and~$\norm{\lp}{\lh}\leq\tilde{\kappa}^2$.

\subsection{Link condition}\label{Sec:link.cond}
In the subsequent analysis, we shall derive convergence rates by using \emph{approximate source conditions}, which are related to a certain benchmark smoothness. This benchmark smoothness is determined by the user. In order to have handy arguments to derive the convergence rates, we shall fix an (integer)  power~$q\geq 1$. We shall use a link condition to transfer smoothness in terms of the operator L to the covariance operator~$\tp$. This link condition will involve an index function.
\begin{definition}[Index function]
A function~$\varphi : \RR^+ \to \RR^+$ is said to be an index function if it is continuous and strictly increasing with~$\varphi(0) = 0$. 
\end{definition}
An index function is called sub-linear whenever the mapping~$t\mapsto t/\varphi(t),\ t>0,$ is nondecreasing. Further, we require this index function to belong to the following class of functions.
\begin{align}\label{fun.class}
\mathcal{F}=\{&\varphi=\varphi_1\varphi_2:\varphi_1,\varphi_2:[0,\kappa^2]\to[0,\infty),\varphi_1~\text{nondecreasing continuous sub-linear},\\  \nonumber
&\varphi_2~\text{ nondecreasing Lipschitz},~\varphi_1(0)=\varphi_2(0)=0\}.
\end{align}

The representation~$\varphi=\varphi_2\varphi_1$ is not unique, therefore~$\varphi_2$ can be assumed as a Lipschitz function with Lipschitz constant~$1$. Now we phrase an important result, needed in our analysis \cite[Corollary~1.2.2]{Peller}: 
$$
\norm{\varphi_2(\tx)-\varphi_2(\tp)}{HS}\leq \norm{\tx-\tp}{HS}.
$$
\begin{example}
 The polynomial function~$\varphi(t)=t^r$, and the logarithm function~$\varphi(t)=t^p\log^{-\nu}\left(\frac{1}{t}\right)$ are examples of functions in the class~$\mathcal{F}$.   
\end{example}

\begin{assumption}
  [link condition]\label{ass:link}
There exist a power~$q > 1$ and an index function~$\varrho$, for which the function~$\varrho^{2}$ is sub-linear. There are constants~$1 \leq \beta <\infty$ such that  
\begin{equation*}
\norm{L^{-q}u}{\HH}\leq \norm{\varrho^{q}(\tp)u}{\HH} \leq \beta^{q}\norm{L^{-q}u}{\HH},\quad u\in\HH.
\end{equation*}
The function~$t\mapsto \varphi(t):=\varrho^{q-1}(t)$ belongs to the class~$\mathcal{F}$.
\end{assumption}
As shown in~\cite{Bottcher2006}, Assumption~\ref{ass:link} implies the range identity~$\range(L^{-q}) =  \range(\varrho^{q}(\tp))$. In the context of a comparison of operators we mention the well-known Heinz Inequality, see~\cite[Prop.~8.21]{Engl}, which asserts that a comparison~$\norm{Gu}{\HH}\leq \norm{Hu}{\HH},\ u\in\HH$, for non-negative self-adjoint operators~$G,H\colon \HH \to \HH$ yields for every exponent~$0< q \leq 1$ that~$\norm{G^{q}u}{\HH}\leq \norm{H^{q}u}{\HH},\ u\in\HH$. Applying this to the above link condition we obtain the following: 
\begin{proposition}
  \label{prop:Heinz}
  Under Assumption~\ref{ass:link} we have
  \begin{align*}
\norm{L^{-1}u}{\HH}\leq \norm{\varrho(\tp)u}{\HH} \leq  \beta\norm{L^{-1}u}{\HH},\quad u\in\HH\\\intertext {and} \norm{L^{-(q-1)}u}{\HH}\leq \norm{\varrho^{q-1}(\tp)u}{\HH} \leq \beta^{(q-1)}\norm{L^{-(q-1)}u}{\HH},\quad u\in\HH.
  \end{align*}
  Moreover, we have that
\begin{equation}
  \label{eq:rho12-bound}
  \norm{\varrho(\tp) \paren{\tp + \la I}^{-1/2}}{\lh}\leq \frac{\varrho(\la)}{\sqrt\la},\quad 0 < \la \leq 1.
\end{equation}
\end{proposition}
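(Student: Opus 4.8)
The plan is to derive the first two norm equivalences directly from the link condition in Assumption~\ref{ass:link} by invoking the Heinz inequality with two suitable fractional exponents, and to obtain the spectral bound~\eqref{eq:rho12-bound} from the functional calculus of $\tp$ together with the sub-linearity of $\varrho^{2}$.

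First I would record that $L^{-q}$, $\varrho^{q}(\tp)$ and $\beta^{q}L^{-q}$ are all bounded, non-negative self-adjoint operators on $\HH$, so that the Heinz inequality applies. The link condition is precisely the pair of comparisons $\norm{L^{-q}u}{\HH}\leq \norm{\varrho^{q}(\tp)u}{\HH}$ and $\norm{\varrho^{q}(\tp)u}{\HH}\leq \norm{\beta^{q}L^{-q}u}{\HH}$, valid for all $u\in\HH$. Applying the Heinz inequality with exponent $1/q\in(0,1)$ to each of these, and using $(L^{-q})^{1/q}=L^{-1}$, $(\varrho^{q}(\tp))^{1/q}=\varrho(\tp)$ and $(\beta^{q}L^{-q})^{1/q}=\beta L^{-1}$, yields the first chain of inequalities. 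Repeating the same argument with the exponent $(q-1)/q\in(0,1)$, for which $(L^{-q})^{(q-1)/q}=L^{-(q-1)}$, $(\varrho^{q}(\tp))^{(q-1)/q}=\varrho^{q-1}(\tp)$ and $(\beta^{q}L^{-q})^{(q-1)/q}=\beta^{q-1}L^{-(q-1)}$, gives the second chain. The only point requiring attention here is that the scalar factor passes through the fractional power correctly, i.e. $(\beta^{q}L^{-q})^{s}=\beta^{qs}L^{-qs}$ for $s\in(0,1)$, which is immediate because $\beta>0$ commutes with the functional calculus of $L^{-q}$.

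For the bound~\eqref{eq:rho12-bound} I would pass to the functional calculus of $\tp$, which is a bounded non-negative self-adjoint operator with spectrum contained in $[0,\kappa^{2}]$. Since $\varrho(\tp)\paren{\tp+\la I}^{-1/2}$ is a continuous function of $\tp$, its operator norm equals $\sup_{t\in\sigma(\tp)}\varrho(t)/\sqrt{t+\la}$, so it suffices to prove $\varrho^{2}(t)/(t+\la)\leq \varrho^{2}(\la)/\la$ for all $t\in[0,\kappa^{2}]$. I would split into two cases. If $t\leq\la$, then $\varrho^{2}(t)\leq\varrho^{2}(\la)$ by monotonicity of $\varrho$ while $t+\la\geq\la$, which gives the claim. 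If $t>\la$, then $t+\la\geq t$, and the sub-linearity of $\varrho^{2}$, i.e. the monotonicity of $s\mapsto\varrho^{2}(s)/s$, gives $\varrho^{2}(t)/t\leq\varrho^{2}(\la)/\la$, whence $\varrho^{2}(t)/(t+\la)\leq\varrho^{2}(t)/t\leq\varrho^{2}(\la)/\la$. Taking square roots and the supremum over $t\in\sigma(\tp)$ finishes the estimate.

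The first two statements are essentially bookkeeping once the Heinz inequality is in place, so the genuine content lies in the spectral estimate. I do not expect a serious obstacle; the one point of care is the case distinction in the last step, where sub-linearity controls the regime $t\geq\la$ and plain monotonicity handles $t\leq\la$. One must also be careful to exploit sub-linearity of $\varrho^{2}$ rather than of $\varrho$, as it is the squared quotient $\varrho^{2}(s)/s$ that is being compared; this is exactly the hypothesis recorded in Assumption~\ref{ass:link}.
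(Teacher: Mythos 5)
Your proof is correct and follows essentially the same route as the paper: the first two chains are obtained by applying the Heinz inequality to the link condition with exponents $1/q$ and $(q-1)/q$, and the bound~\eqref{eq:rho12-bound} rests on the same spectral estimate via sub-linearity of $\varrho^{2}$. The only difference is presentational: you carry out the scalar case distinction ($t\leq\la$ versus $t>\la$) explicitly, whereas the paper packages the identical estimate in operator form as $\norm{\varrho(\tp)(\la(\tp+\la I)^{-1})^{1/2}}{\lh}\leq\norm{\varrho^{2}(\tp)\la(\tp+\la I)^{-1}}{\lh}^{1/2}\leq\varrho(\la)$.
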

\begin{proof}
The first assertions are a consequence of Heinz Inequality. For the last one, we argue as follows. Since~$\varrho^{2}$ is assumed to be sub-linear. Hence we find that
\begin{align*}
  \norm{\varrho(\tp) \paren{\tp + \la I}^{-1/2}}{\lh}
  & = \frac 1 {\sqrt\la} \norm{\varrho(\tp) \paren{\la\paren{\tp + \la I}^{-1}}^{1/2}}{\lh}\\
    &\leq  \frac 1 {\sqrt\la}  \norm{\varrho^{2}(\tp) \paren{\la\paren{\tp+\la I}^{-1}}}{\lh}^{1/2}\\
  & \leq \frac {\varrho(\la)} {\sqrt\la},
\end{align*}
which completes the proof.
\end{proof}
\begin{remark}
From the assertion, it is heuristically clear that the function~$\varrho^{2}$ cannot increase faster than linearly, because the operator~$\tp = L^{-1} \lp L^{-1}$ has~$L^{-2}$ in it. More details will be given in Section~\ref{sec:optimality}.
\end{remark}

Link conditions as in Assumption~\ref{ass:link} imply decay rates for the singular numbers of the operators, known as Weyl's Monotonicity Theorem~\cite[Cor.~III.2.3]{Bhatia1997}. In our case, this yields that~$s_{j}(\varrho(\tp))  = \varrho(s_{j}(\tp))\asymp s_{j}(L^{-1})$. For classical spaces, as e.g.\ Sobolev spaces, when~$L:= (-\Delta)^{-1/2}$, then~$s_{j}(L^{-1}) \asymp 1/j$ (one spatial dimension). For the above index function~$\varrho$ this means that~$s_{j}(\tp) \asymp \varrho^{-1}(1/j)$.
\begin{example}[Finitely smoothing]
In case that the function~$\varrho$, and hence its inverse is of power type then this implies a power type decay of the singular numbers of~$\tp$. In this case, the operator~$\tp$ is called finitely smoothing. \end{example}
\begin{example}[Infinitely smoothing]
If, on the other hand, the function~$\varrho$ is logarithmic, as e.g.,\ $\varrho(t)=\paren{\log\frac{1}{t}}^{-\frac{1}{\mu}}$, then~$s_{j}(\tp) \asymp e^{-j^{\mu}}$. In this case, the operator~$\tp$ is called infinitely smoothing. 
\end{example}

\subsection{Effective dimension}\label{sec:eff-dim}

Now we introduce the concept of the effective dimension which is an important ingredient to derive the rates of convergence under H{\"o}lder's source condition \cite{Blanchard,Caponnetto,Guo} and general source condition \cite{Lu2020,Rastogi}.  The effective dimension for the trace--class operator~$\tp$ is defined as,
$$\NT(\la):=\op{Tr}\left((\tp+\la I)^{-1}\tp\right), \text{  for }\la>0.$$
It is known that the function~$\la\to \NT(\la)$ is continuous and decreasing from~$\infty$ to zero for~$0 < \la < \infty$ for an infinite dimensional operator~$\tp$ (see for details \cite{Blanchard2012,Blanchard2020,Lin2015,Lu2020,Zhang}). 

The integral operator~$\tp$ is a trace class operator, hence the effective dimension is finite,  and we have that
$$
\NT(\la)\leq \norm{(\tp+\la I)^{-1}}{\lh}\op{Tr}\left(\tp\right) \leq \frac{\kappa^2}{\la}.
$$

In the subsequent analysis, we shall need a relationship between the effective dimensions~$\NT(\la)$ and~$\NL(\la)$. For this, the link condition (Assumption~\ref{ass:link}) is crucial. The arguments will be based on operator monotonicity and concavity.  Below, for an operator~$T$ we assign~$s_j(T),\ j=1,2,\dots$ the singular numbers of the operator~$T$. 

The following assumption was introduced in~\cite{Lin2015}. There, it was shown that it is satisfied for both moderately ill-posed and severely ill-posed operators.
\begin{assumption}\label{Ass.abs}
There exists a constant~$C$ such that for~$0 < t \leq \norm{\lp}{\mathcal L(\HH)}$ we have
$$t^{-1}\sum\limits_{s_j(\lp)< t}s_j(\lp)<C\#\brac{j, \quad s_j(\lp)\geq t}.$$
\end{assumption}
The relation between the effective dimensions is established in the following proposition, with proof will given in Appendix~\ref{sec:lemma-proof}.
\begin{proposition}\label{prop:relation.eff_dim}
Suppose Assumptions~\ref{ass:link} and~\ref{Ass.abs} hold true. Suppose the function~$\varrho$ from the link condition is such that the function~$t\mapsto \paren{\varrho^{2q}}^{-1}(t)$ is operator concave, and that there is some~$n\in\NN$ for which the function~$t\mapsto \varrho^{-1}(t)/t^n$ is concave. Then, there is~$\widetilde C$ for which we have that
\begin{equation*}
 \NL\paren{\frac{\la}{\varrho^2(\la)}} \leq  2\beta^{n+1}\widetilde{C}\NT(\la),\quad 0<\la\leq\norm{\tp}{\mathcal L(\HH)}.
\end{equation*}
\end{proposition}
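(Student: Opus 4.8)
The plan is to collapse the two–operator comparison onto the single operator $\tp$ by means of a trace identity, and then to split the resulting spectral sum according to whether the eigenvalues of $\tp$ lie above or below $\la$. Throughout I write $a_j:=s_j(\tp)$, $N_T(\la):=\#\{j:a_j\ge\la\}$, $\mu:=\la/\varrho^2(\la)$ and $\psi(t):=t/\varrho^2(t)$, which is increasing because $\varrho^2$ is sub-linear.

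First I would use $\lp=L\tp L$ (immediate from the definitions, since $\tp=L^{-1}\lp L^{-1}$) together with cyclicity of the trace to establish
\[
\NL(\mu)=\op{Tr}\paren{\lp(\lp+\mu)^{-1}}=\op{Tr}\paren{\tp\paren{\tp+\mu L^{-2}}^{-1}},
\]
obtained by writing $\lp+\mu=L(\tp+\mu L^{-2})L$. Proposition~\ref{prop:Heinz} gives $\varrho^2(\tp)\preceq\beta^2 L^{-2}$, hence $\mu L^{-2}\succeq\beta^{-2}\mu\,\varrho^2(\tp)$; since the resolvent is operator–decreasing and $\op{Tr}(\tp(\cdot)^{-1})=\op{Tr}(\tp^{1/2}(\cdot)^{-1}\tp^{1/2})$ preserves the Loewner order, this yields
\[
\NL(\mu)\le\op{Tr}\paren{\tp\paren{\tp+\beta^{-2}\mu\,\varrho^2(\tp)}^{-1}}=\sum_j\frac{a_j}{a_j+\beta^{-2}\mu\,\varrho^2(a_j)}.
\]
The right–hand side involves $\tp$ only, so all non-commutativity between $L$ and $\tp$ has been removed.

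Next I would split this sum at $a_j=\la$. For $a_j\ge\la$ each summand is at most $1$, so these terms contribute at most $N_T(\la)\le2\NT(\la)$, the last step by the elementary bound $\NT(\la)\ge\frac12 N_T(\la)$. For $a_j<\la$ I would drop $a_j$ in the denominator to obtain $\frac{a_j}{a_j+\beta^{-2}\mu\varrho^2(a_j)}\le\beta^2\mu^{-1}\psi(a_j)$; since $\psi$ is increasing, $\{a_j<\la\}=\{\psi(a_j)<\psi(\la)=\mu\}$, and Assumption~\ref{Ass.abs}—which holds for $\lp$ and transfers to the Loewner–comparable sequence $\psi(a_j)=s_j(\tp\varrho^{-2}(\tp))$—bounds $\sum_{a_j<\la}\psi(a_j)$ by a constant multiple of $\mu\,N_T(\la)$. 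Collecting the two parts would give $\NL(\mu)\le 2\beta^{p}\widetilde C\,\NT(\la)$ for some fixed power $p$ of $\beta$ (here $p=4$ in the crude estimate) and a constant $\widetilde C$ built from the constant $C$ of Assumption~\ref{Ass.abs}.

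The remaining task, and the main obstacle, is to bring this crude power $p$ down to the asserted value $n+1$. The two wasteful factors of $\beta$ arise from replacing $L^{-2}$ by $\varrho^2(\tp)$ and from transferring Assumption~\ref{Ass.abs} to the sequence $\psi(a_j)$, and here the concavity hypotheses enter decisively: operator concavity of $(\varrho^{2q})^{-1}$ upgrades the link condition to a genuine Loewner comparison $\tp\asymp\varrho^{-1}(L^{-1})$ between $\tp$ and a function of $L$, while concavity of $t\mapsto\varrho^{-1}(t)/t^n$ supplies the single doubling estimate $\varrho^{-1}(\beta t)\le\beta^{n+1}\varrho^{-1}(t)$ that consolidates all the slack into one factor $\beta^{n+1}$. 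The genuinely delicate point is the small–eigenvalue tail: a pointwise comparison of the two summands fails, since their ratio is unbounded as $a_j\to0$, so the tail must be controlled globally through Assumption~\ref{Ass.abs} at a threshold matched to $\la$ with no rescaling loss—which is precisely what the trace identity makes possible. A secondary technical issue is justifying the functional calculus for the unbounded operator $L$ in the trace identity.
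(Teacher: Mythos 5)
Your large--eigenvalue half is fine, and the overall architecture is genuinely different from the paper's: the paper first proves a two--sided singular--value comparison $s_j(\tp)/\varrho^2(s_j(\tp))\asymp s_j(\lp)$ (Lemma~\ref{Lem:tplp}, via the operator concavity of $(\varrho^{2q})^{-1}$ and Weyl's monotonicity theorem), turns it into an implication between the level sets $\{s_j(\lp)\geq\beta^{n+1}\la/\varrho^2(\la)\}$ and $\{s_j(\tp)\geq\la\}$, invokes \cite[Prop.~6]{Lin2015} to bound $\NL$ by a counting function, and finishes with the monotonicity of $\la\mapsto\la\NL(\la)$. You instead split the spectral sum for $\NL(\mu)$ directly at $a_j=\la$ and use Assumption~\ref{Ass.abs} by hand on the tail; where it is complete, this is a legitimate and arguably more self-contained route (it essentially re-proves the cited Prop.~6 inline).

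The genuine gap is in the tail. Assumption~\ref{Ass.abs} is stated for the sequence $s_j(\lp)$, whereas you need it for $\psi(a_j)=s_j(\tp)/\varrho^2(s_j(\tp))$; the transfer requires precisely the two--sided comparison $s_j(\lp)\asymp\psi(a_j)$, which is the technical core of the whole proposition and which you only assert via the phrase ``Loewner--comparable sequence.'' The operators $\lp=L\tp L$ and $\tp\varrho^{-2}(\tp)$ are \emph{not} directly Loewner--comparable from the stated hypotheses, because $L$ and $\tp$ do not commute: the link condition compares $\varrho^{2q}(\tp)$ with $L^{-2q}$, and to convert this into a comparison of $\lp$ with something whose singular values are computable one must first apply the operator--concave function $(\varrho^{2q})^{-1}$ to obtain $\varrho^{-1}(L^{-1})\preceq\tp\preceq\varrho^{-1}(\beta L^{-1})$, conjugate by $L$ so that both bounding operators become functions of $L$ alone, and only then apply Weyl. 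This is exactly the paper's Lemma~\ref{Lem:tplp}, and it is also where the concavity of $\varrho^{-1}(t)/t^n$ and the exponent $n$ enter. Without carrying out this step your estimate $\sum_{a_j<\la}\psi(a_j)\lesssim\mu\,N_T(\la)$ has no foundation. Two secondary remarks: the trace identity for the unbounded $L$ is an avoidable detour, since $\NL(\mu)=\sum_j s_j(\lp)/(s_j(\lp)+\mu)$ is already a spectral sum in $s_j(\lp)$ and the singular--value comparison alone suffices; and the worry about reducing the power of $\beta$ to $n+1$ is largely cosmetic, since $\widetilde C$ is only existentially quantified --- but by your own account that reduction remains ``the main obstacle,'' so the argument is incomplete even on its own terms.
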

\begin{remark}
For a power type function~$\varrho(t):= t^{a}$ the above concavity assumptions hold true whenever~$2aq \geq 1$ and~$n \leq 1/a \leq n+1$. In particular the number~$n$ is uniquely determined.
\end{remark}

\subsection{Regularization Schemes}\label{sec:Regu.Schemes}
General regularization schemes were introduced and discussed in ill-posed inverse problems and learning theory (See \cite[Section~2.2]{Lu.book} and \cite[Section~3.1]{Bauer} for brief discussion). By using the notation from~\S~\ref{sec:repr}, the Tikhonov regularization scheme from~(\ref{Tikhonov}) can be re-expressed as follows: 
\begin{equation*}
  \fz = \argmin\limits_{f\in\DD(L)}\brac{\norm{\sx A(f)-\yy}{m}^2+\la\norm{L f}{\HH}^2},
\end{equation*}
and its minimizer is given by
\begin{equation*}
  \fz=L^{-1}(\tx+\la I)^{-1}\bx^*\yy.
\end{equation*} 

We consider the following definition.
\begin{definition}[General regularization]\label{regularization}
We say that a family of functions~$\ga:[0,\kappa^2]\to\RR$,~$0<\la\leq a$, is a regularization scheme if there exists~$D,B,\gamma$ such that
  \begin{itemize}
  \item~$\tsup\abs{t \ga(t)}\leq D$.
  \item~$\tsup\abs{\ga(t)}\leq\frac{B}{\la}$.
  \item~$\tsup\abs{\ra(t)}\leq \gamma \qquad   \text{for}\quad \ra(t)=1-\ga(t)t$.
  \item For some constant~$\gamma_p$ (independent of~$\la$), the maximal~$p$ satisfying the condition:
$$\tsup\abs{\ra(t)}t^p\leq\gamma_p\la^p$$ 
is said to be the qualification of the regularization scheme~$\ga$.
\end{itemize}
\end{definition}

\begin{definition}
The qualification~$p$ covers the index function~$\varphi$ if the function~$t\to\frac{t^p}{\varphi(t)}$ is nondecreasing.
\end{definition}

We mention the following result.
\begin{proposition}\label{prop:regularization}
Suppose~$\varphi$ is a nondecreasing index function and the qualification, say~$p\geq 1$, of the regularization~$\ga$ covers~$\varphi$. Then
$$
  \tsup\abs{\ra(\sigma)}\varphi(\sigma)\leq c_p\varphi(\la),\quad c_p=\max(\gamma,\gamma_p).
$$
Also, we have that
$$
\tsup\abs{\ra(\sigma)}\varphi(\la + \sigma)\leq 2^{p}c_p\varphi(\la).
$$
\end{proposition}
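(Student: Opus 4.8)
The plan is to reduce both assertions to a case distinction according to whether $\sigma\le\lambda$ or $\sigma>\lambda$, after first isolating a scaling estimate for $\varphi$ coming from the covering hypothesis. The single extra fact I will use repeatedly is that, since the qualification $p$ covers $\varphi$, the map $t\mapsto t^{p}/\varphi(t)$ is nondecreasing; comparing its values at $t=c\lambda$ and $t=\lambda$ for any $c\ge 1$ gives
\[
  \varphi(c\lambda)\le c^{p}\varphi(\lambda).
\]
In particular $\varphi(2\lambda)\le 2^{p}\varphi(\lambda)$, which is the source of the factor $2^{p}$ in the second assertion.

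For the first assertion I would split the supremum. When $\sigma\le\lambda$, monotonicity of $\varphi$ gives $\varphi(\sigma)\le\varphi(\lambda)$, and the residual bound $\tsup\abs{\ra(t)}\le\gamma$ from Definition~\ref{regularization} yields $\abs{\ra(\sigma)}\varphi(\sigma)\le\gamma\varphi(\lambda)$. When $\sigma>\lambda$, the covering property (comparing $t^{p}/\varphi(t)$ at $\sigma$ and $\lambda$) gives $\varphi(\sigma)\le(\sigma/\lambda)^{p}\varphi(\lambda)$, so that
\[
  \abs{\ra(\sigma)}\varphi(\sigma)\le\frac{\varphi(\lambda)}{\lambda^{p}}\,\abs{\ra(\sigma)}\sigma^{p}\le\gamma_{p}\varphi(\lambda),
\]
using the qualification bound $\tsup\abs{\ra(t)}t^{p}\le\gamma_{p}\lambda^{p}$. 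Taking the larger of the two cases produces the constant $c_{p}=\max(\gamma,\gamma_{p})$.

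For the second assertion I would run the same dichotomy on $\varphi(\lambda+\sigma)$. If $\sigma\le\lambda$ then $\lambda+\sigma\le 2\lambda$, so by monotonicity and the scaling estimate $\varphi(\lambda+\sigma)\le\varphi(2\lambda)\le 2^{p}\varphi(\lambda)$; multiplying by $\abs{\ra(\sigma)}\le\gamma\le c_{p}$ closes this case. If $\sigma>\lambda$ then $\lambda+\sigma<2\sigma$, hence $\varphi(\lambda+\sigma)\le\varphi(2\sigma)\le 2^{p}\varphi(\sigma)$; inserting this and then invoking the first assertion gives $\abs{\ra(\sigma)}\varphi(\lambda+\sigma)\le 2^{p}\abs{\ra(\sigma)}\varphi(\sigma)\le 2^{p}c_{p}\varphi(\lambda)$.

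I do not expect a serious obstacle; the argument is elementary once the scaling inequality $\varphi(c\lambda)\le c^{p}\varphi(\lambda)$ is isolated. The only point needing care is the domain: all suprema run over $[0,\kappa^{2}]$, and the comparisons at $2\lambda$ or $2\sigma$ implicitly require these scaled arguments to remain in the range where the covering property and the bounds of Definition~\ref{regularization} apply. Since the rates are of interest for small $\lambda$, I would either restrict to $\lambda\le\kappa^{2}/2$ or extend $\varphi$ and the bounds monotonically, so this is a routine technicality rather than a genuine difficulty.
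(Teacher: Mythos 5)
Your proof is correct, and it arrives at the same constants ($c_p=\max(\gamma,\gamma_p)$ and $2^pc_p$), but it takes a somewhat different route from the paper's. For the first assertion the paper simply cites \cite[Proposition~3]{Mathe}, whereas you supply the standard self-contained argument (dichotomy $\sigma\leq\la$ versus $\sigma>\la$, using $\abs{\ra}\leq\gamma$ in the first case and the qualification bound together with monotonicity of $t\mapsto t^p/\varphi(t)$ in the second); this is almost certainly the content of the cited result, so nothing is lost. For the second assertion the paper avoids any case distinction: it writes $\abs{\ra(\sigma)}\varphi(\la+\sigma)=\abs{\ra(\sigma)}(\la+\sigma)^p\cdot\varphi(\la+\sigma)/(\la+\sigma)^p$, bounds the quotient by $\varphi(\la)/\la^p$ using the covering property at the single point $\la+\sigma$, and then uses the convexity inequality $(\la+\sigma)^p\leq 2^{p-1}(\la^p+\sigma^p)$ together with $\abs{\ra(\sigma)}\la^p\leq\gamma\la^p$ and $\abs{\ra(\sigma)}\sigma^p\leq\gamma_p\la^p$. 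You instead rerun the dichotomy and replace the convexity step by the doubling estimate $\varphi(2t)\leq 2^p\varphi(t)$, reducing the branch $\sigma>\la$ to the first assertion. Both arguments rest on exactly the same two ingredients (the qualification bounds and the monotonicity of $t^p/\varphi(t)$); the paper's version is a single chain of inequalities, while yours is slightly longer but makes the origin of the factor $2^p$ (namely $\varphi(2\la)\leq 2^p\varphi(\la)$) more transparent. Your closing remark on the domain is apt but harmless: since $\varphi$ is an index function on $\RR^+$ and the covering property is a global monotonicity statement, evaluating at $2\la$, $2\sigma$, or $\la+\sigma$ (which may exceed $\kappa^2$) causes no difficulty, and the paper's own proof relies on the same implicit extension.
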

\begin{proof}
The first assertion is a restatement of~\cite[Proposition~3]{Mathe}. For the second assertion, we stress that~$(\la + \sigma)^{p} \leq 2^{p-1} (\la^{p} + \sigma^{p})$, which follows from convexity. This yields
$$
\abs{\ra(\sigma)}\varphi(\la + \sigma)\leq \abs{\ra(\sigma)}(\la + \sigma)^{p}\frac{\varphi(\la + \sigma)}{(\la + \sigma)^{p}}\leq 2^{p-1}\abs{\ra(\sigma)}(\la^{p} + \sigma^{p})\frac{\varphi(\la)}{\la^{p}} \leq 2^{p}c_{p}\la^{p}\frac{\varphi(\la)}{\la^{p}},
$$
which implies the second assertion and completes the proof.
\end{proof}

Essentially all the linear regularization schemes (Tikhonov regularization, Landweber iteration or spectral cut-off) satisfy the properties of general regularization. Inspired by the representation for the minimizer of the Tikhonov functional we consider a general regularized solution in Hilbert scales corresponding to the above regularization in the form
\begin{equation}\label{fzl}
  \fz=L^{-1}\ga(\tx)\bx^*\yy.
\end{equation}

\section{Convergence analysis}\label{Sec-analysis}
Here we study the convergence for general regularization schemes in the Hilbert scale of the linear statistical inverse problem based on the prior assumptions and the link condition.

The analysis will distinguish between two cases, the `regular' one, when~$\fp\in\DD(L)$, and the `low smoothness' case, when~$\fp\not\in\DD(L)$. In either case, we shall first utilize the concept of \emph{distance functions}. This will later give rise to establish convergence rates in a more classical style.

For the asymptotical analysis, we shall require the standard assumption relating the sample size~$m$ and the parameter~$\la$ such that
\begin{equation}\label{l.la.condition}
\NT(\la) \leq  m\la \qquad \text{and}\qquad 0<\la\leq 1. 
\end{equation}

It will be seen, that asymptotically the condition~(\ref{l.la.condition}) is always satisfied for the parameter which is optimally chosen under known smoothness.

The fact that~$\NT(\la)$ is decreasing function of~$\la$ and~$\la\leq 1$ implies that~$\NT(1)\leq \NT(\la)$. Hence from condition~\eqref{l.la.condition} we obtain,
\begin{equation}\label{nl}
\NT(1) \leq  m\la.
\end{equation}

Several probabilistic quantities will be used to express the error bounds. Precisely, for an index function~$\zeta$ we let
\begin{align}
  \Xi^{\zeta}=\Xi^{\zeta}(\la) &:= \norm{\paren{\frac{1}{\zeta}}(\tx+\la I)\zeta(\tp+\la  I)}{\lh},\label{ali:xizeta}\\
\Lambda= \Lambda(\la)&: =\norm{(\lp+\la I)^{-1/2}(\lp-\lx)}{HS},\label{ali:lambda}\\ 
\Upsilon= \Upsilon(\la)&: =\norm{(\tp+\la I)^{-1/2}(\tp-\tx)}{HS}\label{ali:upsilon}, 
\intertext{and}
 \Psi= \Psi(\la)&:= \norm{(\tp+\la I)^{-{1/2}}\bx^*(\sx A\fp-\yy)}{\HH}.\label{ali:psi}
\end{align}
In case that~$\zeta(t) = t^{r}$ we abbreviate~$\Xi^{t^{r}}$ by~$\Xi^{r}$ and~$\Xi^{t}$ by~$\Xi$, not to be confused with the power. High probability bounds for these quantities are known, and these will be given correspondingly in Propositions~\ref{main.bound} and~\ref{I1}.

\subsection{The oversmoothing case}\label{sec:oversmoothing}
As mentioned before, we shall use distance functions, and these are called `approximate source conditions' sometimes, because these measure the violation of a benchmark smoothness. Here the benchmark will be~$\fp \in\DD(L)$. 
\begin{definition}[Approximate source condition]\label{app.source.cond}
We define the distance function~$d : [0, \infty)\to[0, \infty)$ by
\begin{align}\label{eq.app.source}
d(R)=\inf\brac{\norm{\fp - f}{\HH}:f= L^{-1}v \text{ and }\norm{v}{\HH} \leq R},\quad R>0.
\end{align}
We denote~$\fr$ the element which realizes the above minimization problem.
\end{definition}
Notice the following: If~$\fp\in\DD(L)$ then for some~$R$ the minimizer~$\fr$ of the distance function will obey~$\fr=\fp$.

\begin{remark}
In a rudimentary form, this approach was given in \cite[Thm.~6.8]{Baumeister1987}. It was then introduced in regularization theory in \cite{Hofmann2006}. Within learning theory, such a concept was also used in the study \cite{Smale2003}.
\end{remark}

\begin{theorem}\label{err.upper.bound.gen.l}
Let~$\zz$ be i.i.d. samples drawn according to the probability measure~$\rho$. Suppose the Assumptions~\ref{fp}--\ref{Ass.abs} hold true. Suppose that the qualification~$p$ of the regularization~$\ga$ covers the function~$\varrho$ (for~$\varrho(t)$ from Assumption~\ref{ass:link})  and that~$\varrho^{-1}(t)/t^n$,~$\paren{\varrho^{2q}}^{-1}(t)$ are concave, or operator concave functions for some~$n\geq 1$, respectively. Then for all~$0<\eta<1$, and for~$\la$ satisfying the condition~\eqref{l.la.condition} the following upper bound holds for the regularized solution~$\fz$ (\ref{fzl}) with confidence~$1-\eta$:
$$
  \norm{\fz-\fp}{\HH}\leq C\brac{d(R)+ 2R \varrho\paren{\la}}\log^4\paren{\frac{4}{\eta}},\quad R\geq \Sigma+\kappa M/\NT(1),
$$
where~$C$ depends on~$B$,~$D$,~$c_p$,~$\kappa$,~$n$,~$\beta$,~$\widetilde{C}$.
\end{theorem}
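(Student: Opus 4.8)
The plan is to compare $\fz$ not with $\fp$ directly—which is impossible to treat head-on, since $\fp\notin\DD(L)$ and $L\fp$ is undefined—but with the distance-function minimizer $\fr=L^{-1}v_R$, where $v_R:=L\fr$ satisfies $\norm{v_R}{\HH}\le R$. First I would split
$$\norm{\fz-\fp}{\HH}\le\norm{\fz-\fr}{\HH}+\norm{\fr-\fp}{\HH}=\norm{L^{-1}(\uz-v_R)}{\HH}+d(R),$$
where $\uz=\ga(\tx)\bx^{*}\yy$; the second summand equals $d(R)$ by Definition~\ref{app.source.cond}. For the first summand I would invoke the Heinz inequality in the form of Proposition~\ref{prop:Heinz}, namely $\norm{L^{-1}w}{\HH}\le\norm{\varrho(\tp)w}{\HH}$, thereby reducing everything to an estimate for $\norm{\varrho(\tp)(\uz-v_R)}{\HH}$.

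Next I would expand $\uz-v_R$. Writing $\yy=\sx A\fp+(\yy-\sx A\fp)$ and using $\sx A\fr=\bx v_R$, hence $\bx^{*}\sx A\fr=\tx v_R$, yields the three-term splitting
$$\uz-v_R=-\ra(\tx)v_R+\ga(\tx)\bx^{*}(\yy-\sx A\fp)+\ga(\tx)\bx^{*}\sx A(\fp-\fr),$$
a deterministic \emph{bias}, a \emph{stochastic} term, and a \emph{propagated approximation} term. For the bias $\varrho(\tp)\ra(\tx)v_R$ I would insert $(\tp+\la I)^{\pm1/2}$ and $(\tx+\la I)^{\pm1/2}$, transfer from $\tp$ to $\tx$ through the quantity $\Xi^{1/2}$, and combine the bound~\eqref{eq:rho12-bound} for $\varrho(\tp)(\tp+\la I)^{-1/2}$ with the qualification estimate of Proposition~\ref{prop:regularization} (since $p$ covers $\varrho$), using $\sup_t(t+\la)^{1/2}\abs{\ra(t)}\le c\sqrt\la$; this produces a contribution of order $R\,\varrho(\la)$. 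For the stochastic term I would factor off $(\tp+\la I)^{-1/2}\bx^{*}(\sx A\fp-\yy)$, whose norm is exactly $\Psi$, bound the remaining operator $\varrho(\tp)\ga(\tx)(\tp+\la I)^{1/2}$ by the same resolvent shuffling, and insert the high-probability bound on $\Psi$ from the Appendix. Invoking $\NT(\la)\le m\la$ and $\NT(1)\le m\la$ from~\eqref{l.la.condition}, together with $R\ge\Sigma+\kappa M/\NT(1)$, the stochastic part is again of order $R\,\varrho(\la)$.

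The main obstacle is the propagated approximation term $\varrho(\tp)\ga(\tx)\bx^{*}\sx A(\fp-\fr)$, which must be controlled by $C\,d(R)$ with a constant \emph{independent of} $\la$. A naive estimate treating $\sx A(\fp-\fr)$ as a generic vector of norm $\le\tilde{\kappa}\,d(R)$ loses a spurious factor $\varrho(\la)/\sqrt\la$, because $\bx^{*}\sx A=L^{-1}\lx$ (formally $\tx L$) carries an unbounded $L$ that cannot be applied to $\fp-\fr$. The remedy I would pursue is to pass to the ``$\lp$-world'', where no power of $L$ is present: using $\bx^{*}\sx A=L^{-1}\lx$ and the commutation $\ga(\tx)\bx^{*}=\bx^{*}\ga(\bx\bx^{*})$, I would measure the factor $A(\fp-\fr)$ through the resolvents of $\lp$ at the \emph{rescaled} parameter $\tilde\la:=\la/\varrho^{2}(\la)$, controlling $\lp-\lx$ by $\Lambda(\tilde\la)$.

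Here the link condition (Assumption~\ref{ass:link}) and the effective-dimension comparison $\NL(\la/\varrho^{2}(\la))\le 2\beta^{n+1}\widetilde C\,\NT(\la)$ of Proposition~\ref{prop:relation.eff_dim} are exactly what convert the $\lp$-scale estimates at $\tilde\la$ back into the admissible scale governed by $\NT(\la)\le m\la$; this is the point where Assumption~\ref{Ass.abs} and the concavity hypotheses on $\varrho^{-1}(t)/t^{n}$ and $(\varrho^{2q})^{-1}$ are consumed. Collecting the three contributions, replacing the probabilistic quantities $\Xi,\Lambda,\Upsilon,\Psi$ by their simultaneous high-probability bounds (each costing one power of $\log(4/\eta)$, for $\log^{4}(4/\eta)$ in total), and absorbing the constants $B,D,c_p,\kappa,n,\beta,\widetilde C$ into a single $C$, would yield the asserted estimate $C\{d(R)+2R\,\varrho(\la)\}\log^{4}(4/\eta)$.
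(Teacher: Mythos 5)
Your proposal is correct and follows essentially the same route as the paper: your triangle-inequality split $\norm{\fz-\fp}{\HH}\le d(R)+\norm{L^{-1}(\uz-v_R)}{\HH}$ together with the three-term expansion of $\uz-v_R$ is algebraically identical to the paper's decomposition, since $L^{-1}\ra(\tx)L(\fp-\fr)=(\fp-\fr)-L^{-1}\ga(\tx)\bx^*\sx A(\fp-\fr)$, so your ``propagated approximation'' term is exactly what the paper absorbs into the operator norm $\norm{L^{-1}\ra(\tx)L}{\lh}$ via Lemma~\ref{eq:lemma}. The handling of each piece also matches: the benchmark bias via the qualification and~\eqref{eq:rho12-bound}, the stochastic term via $\Psi$, and the propagated term via the $\lp$-resolvent calculus combined with Proposition~\ref{prop:relation.eff_dim} (the paper evaluates $\Lambda$ at $\la$ and compensates the extra $\varrho^2(\la)$ through $\varrho^2(\la)\NL(\la)\le\NL(\la/\varrho^2(\la))$, rather than working at $\tilde\la$ directly, but this is only a bookkeeping difference).
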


\begin{proof}
For the minimizer~$\fr$ of the distance function defined in \eqref{eq.app.source}, the error can be expressed as follows:
  \begin{align*}
    \fp-\fz= & L^{-1}\brac{\ra(\tx)L(\fp-\fr)+\ra(\tx)L\fr+\ga(\tx)\bx^*(\sx A \fp-\yy)}.
  \end{align*} 

By using Proposition~\ref{prop:Heinz} the error for the regularized solution can be bounded as
\begin{align}\label{over.bd3}
  \norm{\fp-\fz}{\HH}\leq & \norm{L^{-1} \ra(\tx)L(\fp-\fr)}{\HH}+\norm{L^{-1} \ra(\tx)L\fr}{\HH}+\norm{L^{-1} \ga(\tx)\bx^*(\sx A \fp-\yy)}{\HH}\\  \nonumber
  \leq & d(R)\underbrace{\norm{L^{-1} \ra(\tx)L}{\lh}}_{I_{1}}
         +\underbrace{\norm{\varrho(\tp) \ra(\tx)L\fr}{\HH}}_{I_{2}}
         +\underbrace{\norm{\varrho(\tp) \ga(\tx)\bx^*(\sx A
         \fp-\yy)}{\HH}}_{I_{3}}.
\end{align}
We shall bound each summand on the right in~(\ref{over.bd3}).
\begin{description}
\item[$I_{1}$] By Lemma~\ref{eq:lemma} we find that
$$
\norm{L^{-1}\ra(\tx) L}{\lh} \leq 1+(B + D) \paren{\Xi^\varrho  \Xi^\upsilon   + \Xi \varrho(\la)(\varrho(\la)+1)\frac{\Lambda}{\sqrt{\la}}}
$$
with~$\Xi^{\varrho}$,~$\Lambda$ as in~(\ref{ali:xizeta}),~(\ref{ali:lambda}) and~$\upsilon(t):= t/\varrho(t),\ t>0$.

From the estimates of Propositions~\ref{main.bound},~\ref{I1} we get with confidence~$1-\eta/2$ that
\begin{align}\label{over.bound2}
  \norm{L^{-1} \ra(\tx)L}{\lh}
  \leq &
         1+(B+D)\brac{(2\kappa+1)^8+2(2\kappa+1)^4(\varrho(\la)+1)\paren{\frac{\tilde{\kappa}\varrho(\la)}{m\la}+\sqrt{\frac{\tilde{\kappa}\varrho^2(\la)\NL(\la)}{m\la}}}}\\
  &\times \log^4\paren{\frac{4}{\eta}},\notag
\end{align}
For~$\vartheta(\la):=\frac{\la}{\varrho^2(\la)}$ under the fact that~$\la\NL(\la)$ is increasing function and~$\la\leq \vartheta(\la)$, for~$\la$ small enough, we get
\begin{equation*}
\la \NL(\la) \leq \vartheta(\la) \NL\paren{\vartheta(\la)}.
\end{equation*}
This together with Proposition~\ref{prop:relation.eff_dim} implies that
\begin{equation}\label{NL.link}
\varrho^2(\la)\NL(\la) \leq \NL\paren{\frac{\la}{\varrho^2(\la)}} \leq 2\beta^{n+1}\widetilde{C}\NT(\la).  
\end{equation}
Under the condition~\eqref{l.la.condition} from the estimates~\eqref{nl},~\eqref{over.bound2},~\eqref{NL.link} we get with confidence~$1-\eta/2$:
\begin{align}\label{over.bd2}
  \norm{L^{-1} \ra(\tx)L}{\lh}
  \leq & 1+(B+D)\beta^{n+1}\widetilde{C}C_{\kappa,\tilde{\kappa}}\log^4\paren{\frac{4}{\eta}},
\end{align}
where~$C_{\kappa,\tilde{\kappa}}$ depends on~$\kappa,\tilde{\kappa}$.
\item[$I_{2}$]
 By construction of~$\fr$ we have that~$\fr = L^{-1}v,\
  \norm{v}{\HH}\leq R$. Using the fact that~$p$ covers~$\varrho$  we bound
\begin{align}\label{err2}
\norm{\varrho(\tp) \ra(\tx)L\fr}{\HH} & \leq R \Xi^{\varrho} \norm{\varrho(\tx+\la I) \ra(\tx)}{\lh} \leq 2R \Xi^{\varrho} \varrho(\la).
\end{align}
\item[$I_{3}$]
For the last summand we argue
\begin{align}\label{err3}
&\norm{\varrho(\tp) \ga(\tx)\bx^*(\sx A\fp-\yy)}{\HH} \\ \nonumber
\leq & \Xi^{\frac{1}{2}}\Xi^{\varrho} \Psi\norm{ \ga(\tx)\varrho(\tx+\la I)(\tx+\la I)^{\frac{1}{2}}}{\lh} \\ \nonumber
\leq & \Xi^{\frac{1}{2}}\Xi^{\varrho}\Psi\tsup \varrho(t+\la)(t+\la)^{\frac{1}{2}} \abs{\ga(t)}\\ \nonumber
\leq & \Xi^{\frac{1}{2}}\Xi^{\varrho} \Psi\paren{\tsup \varrho(t+\la)(t+\la)^{-\frac{1}{2}}}\brac{\la\tsup \abs{\ga(t)}+\tsup \abs{t \ga(t)}}\\   \nonumber
\leq & \Xi^{\frac{1}{2}}\Xi^{\varrho} \Psi\brac{B+D}\varrho(\la)\la^{-\frac{1}{2}}, 
\end{align}
where~$\Xi^{1/2}$ and~$\Psi$ were as in~(\ref{ali:xizeta}) and~(\ref{ali:psi}).
\end{description}
Summarizing, using the estimates of Propositions~\ref{main.bound},~\ref{I1}, and~\eqref{over.bd2}--\eqref{err3}, we get with confidence~$1-\eta$:
\begin{equation}\label{int.bd.l}
  \norm{\fp-\fz}{\HH} \leq C\left[d(R)+\varrho(\la)\brac{R+\frac{\kappa M}{m\la}+\sqrt{\frac{\Sigma^2\NT(\la)}{m\la}}}\right]\log^4\left(\frac{4}{\eta}\right).
\end{equation}
For any parameter choice~$\la$ satisfying the condition~\eqref{l.la.condition} using the inequality~\eqref{nl} we get that
$$
\frac{\kappa M}{m\la}\leq \frac{\kappa M}{\NT(1)}
$$
and
$$
\sqrt{\frac{\Sigma^{2}\NT(\la)}{m\la}}\leq \Sigma. 
$$
This implies
\begin{equation}\label{int.bd11}
  R+\frac{\kappa
    M}{m\la}+\sqrt{\frac{\Sigma^2\NT(\la)}{m\la}}\leq 2 R,
\end{equation}
provided that~$R \geq \Sigma+\kappa M/\NT(1)$.  Inserting the bound from inequality~\eqref{int.bd11} into the estimate~(\ref{int.bd.l}) completes the proof.
\end{proof}

The bound from Theorem~\ref{err.upper.bound.gen.l} is valid for all~$R\geq \Sigma+\kappa M/\NT(1)$, and we shall now optimize the bound from Theorem~\ref{err.upper.bound.gen.l}  with respect to the choice of~$R\geq \Sigma+\kappa M/\NT(1)$. 

First, if~$\fp\in\DD(L)$ then there is~$\bar{R}\geq \Sigma+\kappa M/\NT(1)$ such that~$d(\bar{R}) = 0$, and 
$$
\norm{\fz-\fp}{\HH}\leq C
\bar{R}~\varrho(\la)\log^4\paren{\frac{4}{\eta}},
$$
where~$C$ depends on~$B$,~$D$,~$c_p$,~$\kappa$,~$n$,~$\beta$,~$\widetilde{C}$.

Otherwise, in the low smoothness case,~$\fp\not\in\DD(L)$, we introduce the following function
\begin{equation*}
  \Gamma(R) := \frac{d(R)}{R}, \qquad R\geq \Sigma+\kappa M/\NT(1),
\end{equation*}
which is non-vanishing decreasing function, and hence the inverse~$\Gamma^{-1}$ exists, and it is decreasing. Given~$\la>0$, by letting~$R = R(\lambda)$ solve the equation~$\Gamma(R) = \varrho(\la)$ we find that
\begin{equation}\label{eq:rl-bound}
  \norm{\fz-\fp}{\HH}\leq C R(\lambda)\varrho(\la)\log^4\paren{\frac{4}{\eta}},  
\end{equation}
where~$C$ depends on~$B$,~$D$,~$c_p$,~$\kappa$,~$n$,~$\beta$,~$\widetilde{C}$.

The above dependency~$\la \to R(\la)$ can be made explicit when assuming that~$\fp$ has some smoothness  measured in terms of a source condition, see Section~\ref{sec:smoothness}, below. For Theorem~\ref{err.upper.bound.gen.l} we get the error bound~\eqref{eq:rl-bound} but the parameter~$\la$ has to obey~\eqref{l.la.condition}. We will get the explicit error bound in terms of the sample size~$m$ in Corollary~\ref{cor.err.upper.bound.gen.l}.

\subsection{The regular case}\label{sec:regular}
Here we analyze the rates of convergence in the case when the underlying true solution~$\fp$ belongs to the domain of the operator~$L$. Again, we shall choose a benchmark smoothness function.

With respect to this benchmark we introduce the following distance function.

\begin{definition}[Approximate source condition]\label{app.source.cond1}
  Given~$q\geq 1$ we define the distance function~$d_{q} : [0, \infty)\to[0, \infty)$ by
  \begin{align}\label{eq.app.source1}
    d_q(R)=\inf\brac{\norm{L(f-\fp)}{\HH}:f= L^{-q}v \text{ and }\norm{v}{\HH} \leq R}. 
  \end{align}
\end{definition}
\begin{theorem}\label{err.upper.bound.gen.k}
Let~$\zz$ be i.i.d. samples drawn according to the probability measure~$\rho$. Suppose Assumptions~\ref{fp}--\ref{ass:link}. Let~$\zeta$ be any index function, such that~$\frac{1}{2}$ covers~$\zeta$. Suppose that the qualification~$p$ of the regularization~$\ga$ covers the function~$\zeta \varphi$ (for~$\varphi(t)$ from Assumption~\ref{ass:link}). Then for all~$0<\eta<1$, the following upper bound holds for the regularized solution~$\fz$ (\ref{fzl}), and for~$\la$ satisfying the condition~\eqref{l.la.condition},  with confidence~$1-\eta$: 
$$\norm{\zeta(\tp)L\paren{\fz-\fp}}{\HH}\leq C\zeta(\la)\left\{d_{q}(R)+R\paren{\varphi(\la)+\frac{1}{\sqrt{m}}}+C'\sqrt{\frac{\NT(\la)}{m\la}}\right\}\log^4\left(\frac{4}{\eta}\right),$$
Consequently, we find that 
$$\norm{\fz-\fp}{\HH} \leq C \varrho(\la)\left\{d_{q}(R)+R\paren{\varphi(\la)+\frac{1}{\sqrt{m}}}+C'\sqrt{\frac{\NT(\la)}{m\la}}\right\}\log^4\left(\frac{4}{\eta}\right)$$ 
and
$$\norm{\ip A(\fz-\fp)}{\LL} \leq C\sqrt{\la}\left\{d_{q}(R)+R\paren{\varphi(\la)+\frac{1}{\sqrt{m}}}+C'\sqrt{\frac{\NT(\la)}{m\la}}\right\}\log^4\left(\frac{4}{\eta}\right),$$  
where~$C$ depends on~$B$,~$D$,~$c_p$,~$\kappa$, and~$C'=2\kappa M+\Sigma$.  
\end{theorem}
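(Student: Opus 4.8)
The plan is to reduce everything to the variables $\uz := L\fz = \ga(\tx)\bx^*\yy$ and $\up := L\fp$, which is legitimate here because $\fp\in\DD(L)$. Using $\bx^*\sx A\fp = \tx\up$ together with $\ga(\tx)\tx = I - \ra(\tx)$, I would first derive the basic error identity
$$
\up - \uz = \ra(\tx)\up + \ga(\tx)\bx^*(\sx A\fp - \yy),
$$
exactly mirroring the decomposition of the oversmoothing case. I then introduce the minimizer $f^{R} = L^{-q}v$, $\norm{v}{\HH}\le R$, of the distance function \eqref{eq.app.source1} and split $\up = L(\fp - f^{R}) + Lf^{R}$. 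By construction $\norm{L(\fp - f^{R})}{\HH} = d_q(R)$, while $Lf^{R} = L^{-(q-1)}v$; the second inequality of Proposition~\ref{prop:Heinz} gives the comparison $\norm{L^{-(q-1)}u}{\HH}\le\norm{\varrho^{q-1}(\tp)u}{\HH}$, which yields a factorization $L^{-(q-1)} = \varphi(\tp)C$ with $\varphi = \varrho^{q-1}$ and $\norm{C}{\lh}\le 1$, so that $Lf^{R} = \varphi(\tp)w$, $\norm{w}{\HH}\le R$. Applying $\zeta(\tp)$ to the identity produces three summands $J_1 = \zeta(\tp)\ra(\tx)L(\fp-f^{R})$, $J_2 = \zeta(\tp)\ra(\tx)\varphi(\tp)w$ and $J_3 = \zeta(\tp)\ga(\tx)\bx^*(\sx A\fp - \yy)$.

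For $J_1$ and $J_3$ the arguments run parallel to the oversmoothing proof. Since $\frac12$ covers $\zeta$, the product $t^p/\zeta(t) = t^{p-1/2}\cdot t^{1/2}/\zeta(t)$ is nondecreasing, so $p$ covers $\zeta$; transferring $\zeta$ from $\tp$ to $\tx$ through the factor $\Xi^\zeta$ and invoking the second assertion of Proposition~\ref{prop:regularization} gives $\norm{\zeta(\tp)\ra(\tx)}{\lh}\le 2^p c_p\,\Xi^\zeta\,\zeta(\la)$, whence $\norm{J_1}{\HH}\lesssim \Xi^\zeta\zeta(\la)d_q(R)$. For $J_3$ I would insert $(\tx+\la I)^{\pm 1/2}$, replace $(\tx+\la I)^{-1/2}\bx^*$ by $(\tp+\la I)^{-1/2}\bx^*$ at the cost of $\Xi^{1/2}$ so that the noise is measured by $\Psi$, and bound the remaining scalar multiplier $\sup_t \zeta(t+\la)(t+\la)^{1/2}\abs{\ga(t)}$ by $(B+D)\zeta(\la)/\sqrt\la$, using that $\zeta(t)/t^{1/2}$ is nonincreasing. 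This yields $\norm{J_3}{\HH}\lesssim \Xi^{1/2}\Xi^\zeta(B+D)\frac{\zeta(\la)}{\sqrt\la}\Psi$; inserting the bound for $\Psi$ from Proposition~\ref{I1} and absorbing the $\frac{\kappa M}{m\la}$ contribution into the variance term via \eqref{l.la.condition} produces $\zeta(\la)C'\sqrt{\NT(\la)/(m\la)}$ with $C' = 2\kappa M + \Sigma$.

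The main obstacle is $J_2 = \zeta(\tp)\ra(\tx)\varphi(\tp)w$, where a smoothing operator built from the population operator $\tp$ must be pushed through the regularization residual $\ra(\tx)$ of the empirical operator. I would split $\varphi(\tp) = \varphi(\tx) + (\varphi(\tp) - \varphi(\tx))$. In the purely empirical piece $\ra(\tx)\varphi(\tx)$ the factors commute, and transferring $\zeta$ via $\Xi^\zeta$ together with the hypothesis that $p$ covers $\zeta\varphi$ and Proposition~\ref{prop:regularization} yields a contribution $\lesssim \Xi^\zeta\zeta(\la)\varphi(\la)R$. The second piece is delicate: bounding $\norm{\zeta(\tp)\ra(\tx)}{\lh}$ as above and estimating $\norm{\varphi(\tp) - \varphi(\tx)}{\lh}\le\norm{\varphi(\tp)-\varphi(\tx)}{HS}$ through the Lipschitz structure of $\varphi$ within the class $\mathcal F$ (the Peller inequality quoted after \eqref{fun.class}) produces a factor controlled by $\norm{\tp - \tx}{HS}$, whose high-probability size is of order $1/\sqrt m$. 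This is precisely what generates the otherwise mysterious $\zeta(\la)R/\sqrt m$ term, and is where the regular case is genuinely more involved than the oversmoothing one, since the benchmark smoothness now lives on $\tp$ rather than being a plain norm bound on $L\fr$.

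Finally I would collect the high-probability bounds for $\Xi^\zeta$, $\Xi^{1/2}$ (Proposition~\ref{main.bound}), and for $\Psi$ and $\norm{\tp-\tx}{HS}$ (Proposition~\ref{I1}), each valid on an event of confidence $1-\eta/\text{const}$; a union bound over these finitely many events, with the individual $\log(4/\eta)$ factors combining, gives the stated $\log^4(4/\eta)$ and the first inequality. The two consequences are specializations: choosing $\zeta = \varrho$ (for which $\frac12$ covers $\varrho$ is equivalent to sub-linearity of $\varrho^2$, and $\zeta\varphi = \varrho^q$) and using the first inequality of Proposition~\ref{prop:Heinz}, $\norm{\fz-\fp}{\HH} = \norm{L^{-1}(\uz - \up)}{\HH}\le\norm{\varrho(\tp)(\uz-\up)}{\HH}$, gives the bound with prefactor $\varrho(\la)$; choosing $\zeta(t) = t^{1/2}$ and using $\norm{\ip A(\fz-\fp)}{\LL} = \norm{\bp(\uz-\up)}{\LL} = \norm{\tp^{1/2}(\uz-\up)}{\HH}$ gives the bound with prefactor $\sqrt\la$.
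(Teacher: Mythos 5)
Your overall architecture coincides with the paper's: the same error identity for $L(\fp-\fz)$, the same three-way split via the minimizer $f^{R}=L^{-q}v$ of the distance function $d_q$, the same treatment of the bias term ($\Xi^\zeta$ plus the qualification covering $\zeta$) and of the noise term ($\Xi^{1/2}$, $\Xi^\zeta$, $\Psi$, with $\kappa M/(m\la)$ absorbed into the variance term via \eqref{l.la.condition}), and the same specializations $\zeta=\varrho$ and $\zeta(t)=t^{1/2}$ at the end. The one place where your argument as written does not go through is the remainder in $J_2$. You split $\varphi(\tp)=\varphi(\tx)+(\varphi(\tp)-\varphi(\tx))$ and propose to bound $\norm{\varphi(\tp)-\varphi(\tx)}{HS}$ by $\norm{\tp-\tx}{HS}$ ``through the Lipschitz structure of $\varphi$''. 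But membership in the class $\mathcal F$ only provides a factorization $\varphi=\varphi_1\varphi_2$ in which the factor $\varphi_2$ is Lipschitz; the sub-linear factor $\varphi_1$ need not be (take $\varphi_1(t)=t^{1/2}$, which occurs already for $\varrho(t)=t^{a}$ with $a(q-1)=1/2$). Fractional powers are not operator Lipschitz, and the Peller-type inequality quoted after \eqref{fun.class} applies only to $\varphi_2$. The best generic substitute, operator H\"older continuity, would give a factor $\norm{\tp-\tx}{\lh}^{1/2}\sim m^{-1/4}$ and thus spoil the $R/\sqrt m$ term you are trying to produce.

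The paper avoids this by swapping only the Lipschitz factor: it writes $\ra(\tx)\varphi(\tp)v=\ra(\tx)\varphi_2(\tx)\varphi_1(\tp)v+\ra(\tx)\paren{\varphi_2(\tp)-\varphi_2(\tx)}\varphi_1(\tp)v$. In the main piece, $\varphi_1(\tp)$ is transferred to $\varphi_1(\tx+\la I)$ at the cost of $\Xi^{\varphi_1}$ (legitimate because $\varphi_1$ is sub-linear, so Proposition~\ref{I1} applies), and the hypothesis that the qualification covers $\zeta\varphi$ then yields the contribution $\zeta(\la)\varphi(\la)R$. In the remainder, $\varphi_1(\tp)$ is simply bounded in operator norm by $\varphi_1(\kappa^2)$, and only $\varphi_2(\tp)-\varphi_2(\tx)$ is estimated by $\norm{\tp-\tx}{\lh}$; this is where the $\zeta(\la)R/\sqrt m$ term legitimately comes from. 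With this localized correction your proof matches the paper's.
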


\begin{proof}
For the minimizer~$\fr$ of the distance function defined in \eqref{eq.app.source1}, the error can be expressed as follows:
\begin{equation*}
    L(\fp-\fz)=\ra(\tx)L(\fp-\fr)+\ra(\tx)L\fr+\ga(\tx)\bx^*(\sx A \fp-\yy).
\end{equation*} 

First, we estimate the error in the interpolation norm for some index function~$\zeta$:
  \begin{align}\label{interpolation.bd}
    \norm{\zeta(\tp)L(\fp-\fz)}{\HH}\leq &
                                           d_{q}(R)\underbrace{\norm{\zeta(\tp)\ra(\tx)}{\lh}}_{I_{1}}
  +\underbrace{\norm{\zeta(\tp)\ra(\tx)L\fr}{\HH}}_{I_{2}}\\  \notag
  &+\underbrace{\norm{\zeta(\tp) \ga(\tx)\bx^*(\sx A\fp-\yy)}{\HH}}_{I_{3}}.
  \end{align} 

  \begin{description}
  \item[$I_1$]
We bound,
  \begin{align}\label{over.bound}
\norm{\zeta(\tp) \ra(\tx)}{\lh} \leq \norm{\zeta(\tp+\la I) \ra(\tx)}{\lh} 
    \leq \Xi^\zeta\norm{\zeta(\tx+\la I) \ra(\tx)}{\lh} 
    \leq \Xi^\zeta c_p\zeta(\la).
  \end{align}
\item[$I_{2}$]
For the minimizer~$\fr = L^{-q}g$ of the distance function~(\ref{eq.app.source1}), we observe from Proposition~\ref{prop:Heinz} that there is~$v\in\HH$ such that~$L\fr=L^{-(q-1)}g = \varphi(\tp)v$,~$\norm{v}{\HH}\leq R$. Thus by assuming that the function~$\varphi= \varphi_{1} \varphi_{2}$ with~$\varphi_{1}$ being sub-linear and~$\varphi_{2}$ Lipschitz (with constant one) we continue bounding 
\begin{align*}
\ra(\tx)L\fr=\ra(\tx)\varphi(\tp)v = \ra(\tx)\varphi_2(\tx)\varphi_1(\tp)v+\ra(\tx)(\varphi_2(\tp)-\varphi_2(\tx))\varphi_1(\tp)v.
\end{align*}

Then we get,
\begin{align}
&\norm{\zeta(\tp)\ra(\tx)L\fr}{\HH}=\norm{\zeta(\tp)\ra(\tx)\varphi(\tp)v}{\HH}\\  \nonumber
\leq & \Xi^\zeta\brac{\norm{\zeta(\tx+\la I)\ra(\tx)\varphi_2(\tx)\varphi_1(\tp)v}{\HH}+\norm{\zeta(\tx+\la I)\ra(\tx)(\varphi_2(\tp)-\varphi_2(\tx))\varphi_1(\tp)v}{\HH}}\\  \nonumber
\leq & R \Xi^\zeta\left\lbrace \norm{\zeta(\tx+\la I)\ra(\tx)\varphi_2(\tx)\varphi_1(\tx+\la I)}{\lh}\norm{\paren{\frac{1}{\varphi_1}}(\tx+\la I)\varphi_1(\tp+\la I)}{\lh}\right. \\ \nonumber
&\left.+\varphi_1(\kappa^2)\norm{\zeta(\tx+\la I)\ra(\tx)}{\lh}\norm{\tp-\tx}{\lh} \right\rbrace \\  \nonumber
\leq&  R\Xi^\zeta\left\{\Xi^{\varphi_1}\tsup\brac{\abs{\ra(t)}\varphi_2(t)\zeta(t+\la)\varphi_1(t+\la)}\right. \\ \nonumber
&\left.+\varphi_1(\kappa^2)\norm{\tp-\tx}{\lh}\tsup\brac{\abs{\ra(t)}\zeta(t+\la)}\right\}\\  \nonumber
\leq & R 2^qc_p\zeta(\la)\Xi^\zeta\brac{\Xi^{\varphi_1}\varphi(\la)+\varphi_1(\kappa^2)\norm{\tp-\tx}{\lh}},
\end{align}
because of the qualification of the regularization. 
\item[$I_{3}$]
From the arguments used in \eqref{err3}, we get
\begin{align}\label{err1}
\norm{\zeta(\tp) \ga(\tx)\bx^*(\sx A \fp-\yy)}{\HH} \leq \Xi^{\frac{1}{2}}\Xi^{\zeta} \Psi\brac{B+D}\zeta(\la)\la^{-\frac{1}{2}}.
\end{align}
  \end{description} 
Overall, using Propositions~\ref{main.bound}--\ref{I1} and \eqref{over.bound}--\eqref{err1} in \eqref{interpolation.bd} we obtain with confidence~$1-\eta$:
\begin{equation}\label{zeta.bd}
\norm{\zeta(\tp)L\paren{\fz-\fp}}{\HH}\leq C\zeta(\la)\left\{d_{q}(R)+R\paren{\varphi(\la)+\frac{1}{\sqrt{m}}}+\frac{\kappa M}{m\la}+\sqrt{\frac{\Sigma^2\NT(\la)}{m\la}}\right\}\log^4\left(\frac{4}{\eta}\right).
\end{equation}
The fact that~$\NT(\la)$ is decreasing function of~$\la$ with the inequality~\eqref{l.la.condition} implies that
$$
\frac{\kappa M}{m\la}\leq \frac{\kappa M}{m\la}\frac{\NT(\la)}{\NT(1)}\leq \frac{\kappa M}{\NT(1)}\sqrt{\frac{\NT(\la)}{m\la}}. 
$$
This, together with~\eqref{zeta.bd} yields the first result.

For the last two estimates in Theorem~\ref{err.upper.bound.gen.k}, by using Proposition~\ref{prop:Heinz} we get,
\begin{align*}\label{norm.bd}
\norm{\fp-\fz}{\HH} & = \norm{L^{-1}\brac{L(\fp-\fz)}}{\HH}\leq\norm{\varrho(\tp)L(\fp-\fz)}{\HH},
\end{align*}
and
\begin{equation*}\label{direct.norm}
\norm{\ip A(\fz-\fp)}{\LL}=\norm{\tp^{1/2}L(\fz-\fp)}{\HH}.
\end{equation*}
These two upper bounds can now be estimated from the general bound by letting~$\zeta:=\varrho$ and~$\zeta(t):=t^\frac{1}{2}$, respectively. We also use that~$\varrho^2$ is sub-linear, and this completes the proof.
\end{proof}

The bound from Theorem~\ref{err.upper.bound.gen.k} is valid for all~$R\geq 1$, and we shall now optimize the bound from Theorem~\ref{err.upper.bound.gen.k} with respect to the choice of~$R\geq 1$.

First, if~$\fp\in\range\paren{L^{-q}}$ then~$d_{q}(\bar{R}) = 0$ for some~$\bar{R}$, we find that

$$\norm{\fz-\fp}{\HH}\leq C\varrho\paren{\la}\left\{\bar{R}\paren{\varphi(\la)+\frac{1}{\sqrt{m}}}+C'\sqrt{\frac{\NT(\la)}{m\la}}\right\}\log^4\paren{\frac{4}{\eta}}.$$

Otherwise, in case that~$\fp\not\in\range\paren{L^{-q}}$ we introduce the following function
\begin{equation}
  \Gamma_{q}(R) := \frac{d_{q}(R)}{R}, \qquad R \geq 1,
\end{equation}
which is non-vanishing decreasing function, and hence the inverse~$\Gamma_{q}^{-1}$ exists and it is decreasing. We finally get the main result, by letting~$R = R(\la)$ solving the equation~$\Gamma_{q}(R) = \varphi(\la)$, and we find that

$$\norm{\fz-\fp}{\HH}\leq C\varrho\paren{\la}\brac{R(\la)\paren{\varphi(\la)+\frac{1}{\sqrt{m}}}+C'\sqrt{\frac{\NT(\la)}{m\la}}}\log^4\paren{\frac{4}{\eta}}.$$

\section{Smoothness in terms of source-wise representation}\label{sec:smoothness}

Here we shall specify the smoothness of the true solution in terms of the bounded linear injection and self-adjoint operator~$L^{-1}$. 

\begin{assumption}[General source condition]\label{source.cond}
For an index function~$\theta$, the true solution~$\fp$ belongs to the class~$\Omega(\theta,R^\dagger)$ with
  \begin{equation*}
    \Omega(\theta,R^\dagger):=\left\{f \in \HH: f= \theta(L^{-1})v \text{ and }\norm{v}{\HH} \leq R^\dagger\right\}.
  \end{equation*}
\end{assumption}
In the special case when the function~$\theta(t) := t^{r}$ is a power function, such source-wise representation is called H{\"o}lder type.

We aim at bounding the distance functions~$d(R)$ and~$d_q(R)$ from the oversmoothing and regular cases, respectively.

\subsection{The oversmoothing case}\label{Sec:obersmoothing-theta}
Here the benchmark source condition is linear, and we shall thus assume that the index function~$\theta$ is sub-linear. The obtained bounds will rely on the results from~\cite[Theorem~5.9]{Hofmann2007}. We denote the identity function~$\iota:t \mapsto t$, representing the benchmark smoothness index function. Under Assumption~\ref{source.cond} we find that
$$
d(R) \leq R \paren{\paren{\frac{\iota}{\theta}}^{-1}\paren{\frac{R^{\dag}}   R}},\quad R>0.
$$
In order to minimize the bound from Theorem~\ref{err.upper.bound.gen.l}, we balance~$d(R) = R \varrho(\la)$, resulting in
\begin{equation}\label{R.choice1}
  R(\la) = R^{\dag}\frac{\theta\paren{\varrho(\la)}}{\varrho(\la)},\quad \la>0.
\end{equation}

Thus, for this value of~$R(\la)$ under the condition~(\ref{l.la.condition}), the bound~(\ref{eq:rl-bound}) reduces to
\begin{equation}
  \label{eq:thm3.3-reduced}
  \norm{\fz - \fp}{\HH} \leq C R(\la) \varrho(\la) \log^4(4/\eta)\leq
  C R^{\dag}\theta\paren{\varrho(\la)}\log^4(4/\eta).
\end{equation}

The following corollary is the consequence of Theorem~\ref{err.upper.bound.gen.l} which explicitly provide the error bound under the parameter choice of~$\la$ in terms of the sample size~$m$.

\begin{corollary}\label{cor.err.upper.bound.gen.l}
Under the same assumptions of Theorem~\ref{err.upper.bound.gen.l} and Assumption~\ref{source.cond} for the sub-linear function~$\theta$ with the a-priori choice of the regularization parameter~$\la^{\ast}$ from solving the equation~$\mathcal N_{\tp}(\la^{\ast}) = m \la^{\ast}$, for all~$0<\eta<1$, the following error estimates holds with confidence~$1-\eta$:
$$\norm{\fz-\fp}{\HH} \leq  C \theta\paren{\varrho\paren{\la^*}}\log^4\left(\frac{4}{\eta}\right),$$
where~$C$ depends on~$B$,~$D$,~$c_p$,~$\kappa$,~$n$,~$\beta$,~$\widetilde{C}$,~$M$,~$\Sigma$, and~$R^\dagger$.
\end{corollary}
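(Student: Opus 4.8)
The plan is to obtain the corollary as a direct specialization of the reduced bound \eqref{eq:thm3.3-reduced}, which was already derived from Theorem~\ref{err.upper.bound.gen.l} by combining the source condition (Assumption~\ref{source.cond}) with the balancing choice \eqref{R.choice1}. That bound reads
\[
\norm{\fz - \fp}{\HH} \leq C R^{\dagger}\,\theta\paren{\varrho(\la)}\log^4(4/\eta)
\]
and is valid for every regularization parameter $\la$ obeying the admissibility condition \eqref{l.la.condition}. Consequently the whole task reduces to checking that the a-priori choice $\la^{\ast}$, defined implicitly by $\NT(\la^{\ast}) = m\la^{\ast}$, is admissible; once this is done one substitutes $\la = \la^{\ast}$ and absorbs the fixed factor $R^{\dagger}$ into the constant $C$, which immediately yields the asserted estimate.

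First I would argue that $\la^{\ast}$ is well-defined and lies in $(0,1]$. Because $\la \mapsto \NT(\la)$ is continuous and strictly decreasing from $\infty$ to $0$, while $\la \mapsto m\la$ is continuous and strictly increasing from $0$, the difference $\NT(\la) - m\la$ is strictly decreasing with a single zero $\la^{\ast} > 0$. Evaluating this difference at $\la = 1$ shows that $\la^{\ast} \leq 1$ exactly when $m \geq \NT(1)$, which holds for all sample sizes beyond a fixed threshold. Since $\NT(\la^{\ast}) = m\la^{\ast}$ holds by construction, the first inequality in \eqref{l.la.condition} is met with equality and the range restriction $0 < \la^{\ast} \leq 1$ has just been verified, so \eqref{l.la.condition} is satisfied at $\la^{\ast}$.

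Second I would confirm the side constraint $R(\la^{\ast}) \geq \Sigma + \kappa M/\NT(1)$ that underlies the passage from Theorem~\ref{err.upper.bound.gen.l} to \eqref{eq:rl-bound}, and hence to \eqref{eq:thm3.3-reduced}. From \eqref{R.choice1} we have $R(\la) = R^{\dagger}\,\theta(\varrho(\la))/\varrho(\la)$, and sub-linearity of $\theta$ makes $t \mapsto \theta(t)/t$ nonincreasing; in the genuine low-smoothness regime this ratio tends to infinity as $t = \varrho(\la) \to 0$. Since the a-priori rule forces $\la^{\ast} \to 0$ as $m \to \infty$, we obtain $R(\la^{\ast}) \to \infty$, so the fixed lower bound $\Sigma + \kappa M/\NT(1)$ is respected once $m$ is large enough. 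With admissibility secured, substitution of $\la = \la^{\ast}$ into \eqref{eq:thm3.3-reduced} gives $\norm{\fz - \fp}{\HH} \leq C R^{\dagger}\theta(\varrho(\la^{\ast}))\log^4(4/\eta)$, and renaming the constant completes the proof.

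I expect the only delicate point to be this admissibility bookkeeping, namely confirming $\la^{\ast} \in (0,1]$ and $R(\la^{\ast}) \geq \Sigma + \kappa M/\NT(1)$; both are purely asymptotic statements holding for all sufficiently large $m$, with the threshold governed by the fixed quantities $\NT(1)$, $\Sigma$, $\kappa$, and $M$. No substantive new analysis is required, because all of the probabilistic concentration and operator-monotonicity work has already been absorbed into Theorem~\ref{err.upper.bound.gen.l} and into the distance-function estimate borrowed from \cite{Hofmann2007}.
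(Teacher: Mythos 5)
Your proposal is correct and follows essentially the same route as the paper: the paper obtains the corollary by specializing the already-derived bound \eqref{eq:thm3.3-reduced} (itself coming from the distance-function estimate of \cite{Hofmann2007} and the balancing choice \eqref{R.choice1}) to the parameter $\la^{\ast}$ solving $\NT(\la^{\ast})=m\la^{\ast}$, remarking only that this choice ``evidently satisfies'' condition \eqref{l.la.condition}. Your additional bookkeeping (existence and admissibility of $\la^{\ast}$ for $m\geq\NT(1)$, and the lower bound $R(\la^{\ast})\geq \Sigma+\kappa M/\NT(1)$ in the genuinely oversmoothing regime) merely makes explicit what the paper leaves implicit.
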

We observe that the above parameter choice evidently satisfies condition~\eqref{l.la.condition}.

\subsection{The regular case}\label{sec:regular-theta}
In this case the benchmark is given by the index function~$\iota^{q}$, and we shall assume that the given smoothness, measured in terms of~$\theta$, is such that the function~$\iota^{q}/\theta$ for~$0 < t \leq \kappa^2$, is an index function. However, the definition of the distance function~$R \mapsto d_{q}(R)$ is non-standard. The target norm is~$\norm{L(f - \fp)}{\HH}$, and, in order to apply the result from~\cite[Theorem~5.9]{Hofmann2007} we have to `rescale' the given smoothness (in terms of the operator~$L^{-1}$) by factor~$L^{-1}$. If Assumption~\ref{source.cond} holds true with index function~$\theta$, for which the quotient~$\iota^{q}/\theta$ is an index function, and so will be the function~$\iota^{q-1}/(\theta/\iota)$, then this results in the bound
\begin{equation}\label{R.choice}
  d_{q}(R) \leq R~\sbrac{\paren{\frac{\iota^{q}}{\theta}}^{-1}\paren{\frac{R^\dagger}{R}}}^{q-1},\quad R>0.
\end{equation}

According to Theorem~\ref{err.upper.bound.gen.k} we balance
$$
d_{q}(R) = R \varphi(\la).
$$
This yields
$$
R(\la) = R^{\dag} \frac{\theta\paren{\varrho(\la)}}{\varrho^{q}(\la)},\quad R>0.
$$

Inserting this bound into Theorem~\ref{err.upper.bound.gen.k} we find that
\begin{align}\label{eqn:reg.bd}
  \norm{\fz-\fp}{\HH} &\leq C\varrho(\la)\left\{R^\dagger\frac{\theta\paren{\varrho(\la)}}{\varrho(\la)}\paren{1+\frac{1}{\sqrt{m}\varphi(\la)}}+C'\sqrt{\frac{\NT(\la)}{m\la}}\right\}\log^4\paren{\frac{4}{\eta}}\\ \nonumber
                      & = C\varrho(\la)\left\{R^\dagger\frac{\theta\paren{\varrho(\la)}}{\varrho(\la)}+\frac{1}{\sqrt{m}}\paren{R^\dagger\frac{\theta\paren{\varrho(\la)}}{\varrho^q(\la)}+C'\sqrt{\frac{\NT(\la)}{\la}}}\right\}\log^4\paren{\frac{4}{\eta}}
\end{align}
provided that \eqref{l.la.condition} holds.

The optimization of the bound in the inequality~\eqref{eqn:reg.bd} depends on which term is dominant in the last two summands. Then we can balance the remaining (two) terms. This results in the following corollaries for the different choices of the regularization parameter:

\begin{corollary}\label{coro.err.upper.bound.gen.k1}
Suppose~$\frac{\iota^q}{\theta}(t)$ and~$\frac{\iota^q}{\theta}\paren{\varrho(t)}\sqrt{\frac{\NT(t)}{t}}$ are the index functions. Then under the same assumptions of Theorem~\ref{err.upper.bound.gen.k} and Assumption~\ref{source.cond} with the a-priori choice of the regularization parameter~$\la^*=\varphi^{-1}\paren{\frac{1}{\sqrt{m}}}$, for all~$0<\eta<1$, the following upper bound holds with confidence~$1-\eta$:

$$\norm{\fz-\fp}{\HH}\leq C\theta\paren{\varrho(\la^*)}\log^4\paren{\frac{4}{\eta}},$$
where~$C$ depends on~$B$,~$D$,~$c_p$,~$\kappa$,~$M$,~$\Sigma$, and~$R^\dagger$.
\end{corollary}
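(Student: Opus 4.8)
The plan is to specialize the bound \eqref{eqn:reg.bd} (already established in Section~\ref{sec:regular-theta}) to the parameter $\la^{\ast}=\varphi^{-1}(1/\sqrt m)$ and to show that, under the two index-function hypotheses, every summand collapses to a constant multiple of $\theta(\varrho(\la^{\ast}))$. Recall that \eqref{eqn:reg.bd} arises by inserting the balancing choice $R(\la)=R^{\dag}\theta(\varrho(\la))/\varrho^{q}(\la)$ -- legitimate precisely because $\iota^{q}/\theta$ is an index function, so that the distance-function bound \eqref{R.choice} and the inversion of $\Gamma_{q}$ are available -- and reads
\[
\norm{\fz-\fp}{\HH}\leq C\varrho(\la)\brac{R^{\dag}\frac{\theta(\varrho(\la))}{\varrho(\la)}+\frac{1}{\sqrt m}\paren{R^{\dag}\frac{\theta(\varrho(\la))}{\varrho^{q}(\la)}+C'\sqrt{\frac{\NT(\la)}{\la}}}}\log^{4}\paren{\frac 4\eta},
\]
provided $\la$ obeys \eqref{l.la.condition}.

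First I would dispose of the variance contribution. The second hypothesis states that $t\mapsto\frac{\iota^{q}}{\theta}(\varrho(t))\sqrt{\NT(t)/t}=\frac{\varrho^{q}(t)}{\theta(\varrho(t))}\sqrt{\NT(t)/t}$ is an index function; being increasing on $(0,1]$ it is bounded there by a constant $c_{0}$, whence $\sqrt{\NT(\la)/\la}\leq c_{0}\,\theta(\varrho(\la))/\varrho^{q}(\la)$. Consequently the inner parenthesis is at most $(R^{\dag}+C'c_{0})\theta(\varrho(\la))/\varrho^{q}(\la)$, and the bound simplifies to
\[
\norm{\fz-\fp}{\HH}\leq C\,\theta(\varrho(\la))\brac{R^{\dag}+\frac{R^{\dag}+C'c_{0}}{\sqrt m\,\varrho^{q-1}(\la)}}\log^{4}\paren{\frac 4\eta}.
\]
Now I would insert the stated parameter. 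Since $\varphi=\varrho^{q-1}$ and $\varphi(\la^{\ast})=1/\sqrt m$, we have $\sqrt m\,\varrho^{q-1}(\la^{\ast})=\sqrt m\,\varphi(\la^{\ast})=1$, so the bracket equals the constant $2R^{\dag}+C'c_{0}$ and the asserted estimate follows, the new constant absorbing $R^{\dag}$, $C'$ and $c_{0}$. The conceptual heart is thus the observation that the choice $\varphi(\la)=1/\sqrt m$ balances the bias level $1$ against the effective variance factor $1/(\sqrt m\,\varphi(\la))$, while the second index-function hypothesis guarantees that we are in the regime where the $\theta$-term dominates the genuinely stochastic term $\sqrt{\NT(\la)/\la}$.

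The step I expect to be the main obstacle is the verification that $\la^{\ast}$ is admissible, i.e.\ that it satisfies \eqref{l.la.condition}, since \eqref{eqn:reg.bd} is only available under that constraint. Writing $m=1/\varphi^{2}(\la^{\ast})=1/\varrho^{2(q-1)}(\la^{\ast})$, the requirement $\NT(\la^{\ast})\leq m\la^{\ast}$ becomes $\NT(\la^{\ast})/\la^{\ast}\leq\varrho^{-2(q-1)}(\la^{\ast})$; feeding in the bound $\NT(\la)/\la\leq c_{0}^{2}\,\theta^{2}(\varrho(\la))/\varrho^{2q}(\la)$ from the second hypothesis reduces this to $c_{0}\,\theta(\varrho(\la^{\ast}))\leq\varrho(\la^{\ast})$, which holds because in the regular case $\theta(t)/t$ is bounded near the origin -- that is precisely what $\fp\in\DD(L)$ encodes -- so that the inequality is valid once $m$ is large enough, i.e.\ $\la^{\ast}$ is small enough. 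Finally $0<\la^{\ast}\leq 1$ holds for large $m$ because $\varphi$ is an increasing index function with $\varphi(0)=0$. Establishing these admissibility facts cleanly, rather than the main substitution, is where the care is needed.
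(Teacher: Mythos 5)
Your proposal is correct and follows essentially the same route the paper intends: specialize the balanced bound \eqref{eqn:reg.bd}, use the second index-function hypothesis to absorb $\sqrt{\NT(\la)/\la}$ into $\theta(\varrho(\la))/\varrho^{q}(\la)$, and observe that $\sqrt{m}\,\varphi(\la^{\ast})=1$ for the stated choice. Your explicit verification that $\la^{\ast}$ satisfies \eqref{l.la.condition} is a welcome addition the paper leaves implicit, though note that at the boundary of the regular case (where $\theta(t)/t$ tends to a positive constant rather than to zero) it yields the condition only up to a constant, i.e.\ with the same ``for $m$ large enough'' caveat the paper invokes for the companion corollary.
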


\begin{corollary}\label{coro.err.upper.bound.gen.k}
Suppose~$\frac{\iota^q}{\theta}(t)$ and~$\frac{\theta}{\iota^q}\paren{\varrho(t)}\sqrt{\frac{t}{\NT(t)}}$ are the index functions. Then under the same assumptions of Theorem~\ref{err.upper.bound.gen.k} and Assumption~\ref{source.cond} with the a-priori choice of the regularization parameter~$\la^*$ as solution to the equation~$\frac{\Theta^{2}(\varrho(\la^{\ast}))}{\varrho^{2}(\la^{\ast})} \la^{\ast}m = \mathcal N_{\tp}(\la^{\ast})$, for all~$0<\eta<1$, the following upper bound holds with confidence~$1-\eta$:
$$
\norm{\fz-\fp}{\HH}\leq C\theta\paren{\varrho(\la^*)}\log^4\paren{\frac{4}{\eta}},
$$
where~$C$ depends on~$B$,~$D$,~$c_p$,~$\kappa$,~$M$,~$\Sigma$, and~$R^\dagger$.
\end{corollary}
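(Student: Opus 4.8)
The plan is to start from the bound already derived in the regular case, namely the second line of \eqref{eqn:reg.bd}, specialized under Assumption~\ref{source.cond}, and then to carry out the balancing of the free terms in the dominance regime proper to this corollary. Recall that \eqref{eqn:reg.bd} reads
\[
\norm{\fz-\fp}{\HH} \leq C\varrho(\la)\brac{R^\dagger\frac{\theta\paren{\varrho(\la)}}{\varrho(\la)}+\frac{1}{\sqrt{m}}\paren{R^\dagger\frac{\theta\paren{\varrho(\la)}}{\varrho^q(\la)}+C'\sqrt{\frac{\NT(\la)}{\la}}}}\log^4\paren{\frac{4}{\eta}},
\]
valid whenever \eqref{l.la.condition} holds. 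After multiplying through by $\varrho(\la)$ the leading term is $R^\dagger\theta(\varrho(\la))$, while the $1/\sqrt m$-bracket contains two competing pieces, $A(\la):=R^\dagger\theta(\varrho(\la))/\varrho^q(\la)$ and $B(\la):=C'\sqrt{\NT(\la)/\la}$. This corollary treats the regime in which $B$ dominates $A$, complementary to Corollary~\ref{coro.err.upper.bound.gen.k1}.

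The key is the hypothesis that $t\mapsto\frac{\theta}{\iota^{q}}(\varrho(t))\sqrt{t/\NT(t)}=\frac{\theta(\varrho(t))}{\varrho^{q}(t)}\sqrt{\frac{t}{\NT(t)}}$ is an index function. Being increasing and continuous, it is bounded on $(0,\kappa^{2}]$ by its value at the right endpoint, so there is a constant $c_{0}$ with $\frac{\theta(\varrho(\la))}{\varrho^{q}(\la)}\leq c_{0}\sqrt{\NT(\la)/\la}$; equivalently $A(\la)\leq (R^{\dagger}c_{0}/C')\,B(\la)$. Hence the $1/\sqrt m$-bracket is controlled, up to a constant, by $\sqrt{\NT(\la)/\la}$ alone, and I obtain
\[
\norm{\fz-\fp}{\HH} \leq C\brac{R^\dagger\theta\paren{\varrho(\la)} + \frac{\varrho(\la)}{\sqrt{m}}\sqrt{\frac{\NT(\la)}{\la}}}\log^4\paren{\frac{4}{\eta}},
\]
with $C$ now absorbing $R^{\dagger}$, $C'$ and $c_{0}$.

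It then remains to balance the two surviving terms. Requiring $\theta(\varrho(\la))=\frac{\varrho(\la)}{\sqrt m}\sqrt{\NT(\la)/\la}$ and squaring gives exactly the stated defining equation $\frac{\theta^{2}(\varrho(\la^{\ast}))}{\varrho^{2}(\la^{\ast})}\la^{\ast}m=\NT(\la^{\ast})$ (with $\Theta=\theta$); substituting this back shows that the second term equals $\theta(\varrho(\la^{\ast}))$, so both terms are of order $\theta(\varrho(\la^{\ast}))$ and the claimed bound follows. Existence and uniqueness of $\la^{\ast}$ I would establish by rewriting the equation as $\Phi(\la)\varrho^{q-1}(\la)=m^{-1/2}$, where $\Phi$ is the index function above; the left-hand side is a product of increasing index functions, hence itself an invertible index function.

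The step I expect to require the most care is verifying that \eqref{l.la.condition} actually holds at $\la^{\ast}$, since Theorem~\ref{err.upper.bound.gen.k} is invoked only under that condition. By the defining equation, $\NT(\la^{\ast})\leq m\la^{\ast}$ is equivalent to $\theta(\varrho(\la^{\ast}))\leq\varrho(\la^{\ast})$, which is not automatic from the index-function hypotheses alone. I would secure it from the sub-linearity of $\theta$ that is forced by the regular case $\fp\in\DD(L)$ (so that $\theta(t)\lesssim t$), which makes the inequality valid for $m$ large enough that $\la^{\ast}$ is small, together with $0<\la^{\ast}\leq1$. Once this dominance $A\lesssim B$ and the admissibility of $\la^{\ast}$ are in place, the remainder is the mechanical balancing above.
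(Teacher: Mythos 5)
Your proof is correct and follows the same route the paper intends (the paper leaves this corollary's proof implicit, saying only that one identifies the dominant summand in \eqref{eqn:reg.bd} and balances): you use the index-function hypothesis on $\frac{\theta}{\iota^q}\paren{\varrho(t)}\sqrt{t/\NT(t)}$ exactly as intended, to make $C'\sqrt{\NT(\la)/\la}$ dominate, and your balancing equation and verification of \eqref{l.la.condition} match the paper's remark following the corollary. One small caution: the property you actually need and use, $\theta(t)\lesssim t$ near zero (valid in the regular case $r\geq 1$), is \emph{not} what this paper calls sub-linearity --- the paper's sub-linearity means $t/\theta(t)$ is nondecreasing, which is the opposite regime --- so that step should be phrased as ``$\theta(t)/t$ is bounded near $0$'' rather than as sub-linearity of $\theta$.
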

Since by assumption the function~$t \mapsto \frac{\Theta^{2}(\varrho(\la^{\ast}))}{\varrho^{2}(\la^{\ast})}$ is an index function we will have that condition~(\ref{l.la.condition}) holds for~$m$ large enough.

\subsection{Taking the behavior of effective dimension into account}\label{sec:taking-eff-dim}

Below, to be specific, we consider the following two behaviors of the decay of the effective dimensions, say power-type and logarithm type, which is known to hold true in many situations.
\begin{assumption}[Polynomial decay condition]\label{poly.decay}
Assume that there exists some positive constant~$c>0$ such that
\begin{equation*}
\NT(\la) \leq c\la^{-b},\quad\text{ for }0\leq b<1,~\forall \la>0.
\end{equation*}
\end{assumption}

\begin{assumption}[Logarithmic decay condition]\label{log.decay}
Assume that there exists some positive constant~$c>0$ such that
\begin{equation*}
\NT(\la)\leq c\log\left(\frac{1}{\la}\right), \quad\forall \la>0.
\end{equation*}
\end{assumption}

\begin{remark}
We mention that a polynomial decay of the eigenvalues of the covariance operator~$\tp$ yields the polynomial-type behavior of the effective dimension, see~\cite{Caponnetto}. Rather in some situations this behavior is not evident. Lu et al. \cite{Lu2020} showed that for Gaussian kernel~$K_1(x,x') = xx' + e^{-8(x-x')^2}$ with the uniform sampling on~$[0,1]$, the effective dimension exhibits the log-type behavior (Assumption~\ref{log.decay}), on the other hand, the kernel~$K_2(x,x') = \min\{x,x'\}-xt$ exhibits the power-type behavior (Assumption~\ref{poly.decay}).
\end{remark}

\begin{table}[ht!]
  \centering {\renewcommand{\arraystretch}{2}
  \caption{Convergence rates of the regularized solution~$\fz$ for~$a\leq \frac{1}{2}$,~$a q\leq {p}$ under Assumption~\ref{poly.decay}.}\label{comparision.1}
    \begin{tabular}{|l|l|l|l|l|l|}
      \hline
      \multirow{1.5}{4em}{\small{Case}}  &\small{Convergence}  & \small{Parameter} &  \small{True} & \small{Benchmark}  & \multirow{1.5}{4em}{\small{Conditions}} \\  [-15pt]
                                         & \small{rates}   &~$\la^* = \mathcal O(\cdot)$ &  \small{Smoothness} & \small{Smoothness}  &  \\
      \hline
      \small{{\bf Oversmoothing}}  &~$\paren{\frac{1}{\sqrt{m}}}^{\frac{2ar}{b+1}}$ &~$\paren{\frac{1}{\sqrt{m}}}^{\frac{2}{b+1}}$ &~$r\leq 1$ &~$q=1$ &~$a\geq \frac{1}{n+1}$
      \\
      \hline
      \multirow{2}{4em}{\small{{\bf Regular}}} &~$\paren{\frac{1}{\sqrt{m}}}^{\frac{r}{q-1}}$ &
                                                                                          ~$\paren{\frac{1}{\sqrt{m}}}^{\frac{1}{a(q-1)}}$ & \multirow{2}{4em}{$r\geq 1$}  & \multirow{2}{4em}{$q >1$} &~$a q\geq a r+\frac{b+1}{2}$ \\ 
                                         &~$\paren{\frac{1}{\sqrt{m}}}^{\frac{2ar}{2ar+b+1-2a}}$ &~$\paren{\frac{1}{\sqrt{m}}}^{\frac{2}{2ar+b+1-2a}}$ &  &  &~$a r \leq a q\leq a r+\frac{b+1}{2}$\\
      \hline
    \end{tabular}
  }
\end{table}
%%%%%%%%%%%%%%%%%%%%%%%%%%%%%%%%%%%%%%%%%%%%%%%%%%%%%%%%%%%%%%%%%%%%%%%
\begin{table}[ht!]
  \centering {\renewcommand{\arraystretch}{2}
  \caption{Convergence rates of the regularized solution~$\fz$ for~$a\leq \frac{1}{2}$,~$a q\leq {p}$ under Assumption~\ref{log.decay}.}\label{comparision.2}
    \begin{tabular}{|l|l|l|l|l|l|}
      \hline
      \multirow{1.5}{4em}{\small{Case}} &\small{Convergence}  & \small{Parameter} &  \small{True} & \small{Benchmark}  & \multirow{1.5}{4em}{\small{Conditions}} \\  [-15pt]
                                        & \small{rates}   &~$\la^* = \mathcal O(\cdot)$ &  \small{Smoothness} & \small{Smoothness}  &  \\
      \hline
      \small{{\bf Oversmoothing}}  &~$\paren{\frac{\log  m}{m}}^{\frac{ar}{b+1}}$ &~$\paren{\frac{\log  m}{m}}^{\frac{1}{b+1}}$ &~$r\leq 1$ &~$q=1$ &~$a\geq \frac{1}{n+1}$
      \\
      \hline
      \multirow{2}{4em}{\small{{\bf Regular}}} &~$\paren{\frac{\log  m}{m}}^{\frac{r}{2(q-1)}}$ &
                                                                                            ~$\paren{\frac{\log m}{m}}^{\frac{1}{2a(q-1)}}$ & \multirow{2}{4em}{$r\geq 1$} & \multirow{2}{4em}{$q >1$} &~$a q\geq a r+\frac{b+1}{2}$ \\ 
                                        &~$\paren{\frac{\log  m}{m}}^{\frac{ar}{2ar+b+1-2a}}$ &~$\paren{\frac{\log m}{m}}^{\frac{1}{2ar+b+1-2a}}$ &  &  &~$a r \leq a q\leq a r+\frac{b+1}{2}$\\
      \hline
    \end{tabular}
  }
\end{table}

In Tables~\ref{comparision.1} and~\ref{comparision.2} we present the convergence rates under the specific behavior of the effective dimension (Assumptions~\ref{poly.decay} and~\ref{log.decay}, respectively). For a clear picture of the error analysis, we present the error bounds in the particular case when the link condition as well as the source condition are of power type, i.e.,\ $\varrho(t)=t^a$ and~$\theta(t)=t^r$ for parameters~$a,r>0$. The qualification of the regularization is denoted by~$p$ as before. Also, the benchmark smoothness is~$q$, where either~$q=1$ (oversmoothning case) or~$q>1$ (regular case).  Notice, that due to the sub-linearity condition for~$\varrho^{2}$ we must have that~$0 < a \leq 1/2$. Also, throughout the analysis, we assume that the qualification covers the given smoothness, i.e.,\ $a q\leq p$. The bounds presented in the tables are consequences of Corollaries~\ref{cor.err.upper.bound.gen.l}--\ref{coro.err.upper.bound.gen.k}, respectively. Therefore Assumptions~\ref{fp}--\ref{source.cond} are assumed to be satisfied for the following results.

The table is structured as follows. In the first column we present the rates of convergence~$\varepsilon(m)$ for the error estimates of the form:
$$
\mathbb{P}_{\zz\in Z^m}\brac{\norm{\fz-\fp}{\HH}\leq  C\varepsilon(m)\log^4\paren{\frac{4}{\eta}}}\geq 1-\eta.
$$

In the second column, the corresponding order of the regularization parameter choice~$\la^*$ in terms of~$m$ is indicated. In the third and fourth columns, we highlight the smoothness of the true solution~$\fp$, and the benchmark smoothness, respectively. The fifth column presents the parameter involved in the link condition. In the last column, we emphasize additional constraints, specifically on the benchmark smoothness.

The first row corresponds to the oversmoothing case, and the last two rows correspond to the regular case. In the regular case, we observe that the validity of the rates of the convergence depends on the benchmark smoothness through~$a q$.  At the intersection point, when~$a q = a r+\frac{b+1}{2}$, then both rates coincide. As we will see in the next section the rates of convergence in the regular case ($q>1$) are optimal provided that the benchmark smoothness is chosen appropriately.
\section{Optimality of the error bounds}\label{sec:optimality}
We shall discuss the optimality of the previously obtained error bounds, in the regular case, and we shall use the known optimality results from~\cite{Blanchard}. However, at present the smoothness is measured with respect to the operator~$\tp$, whereas in~\cite{Blanchard} this was done with respect to the operator~$\lp:= A^{ \ast} \ip^{\ast} \ip A = L\tp L$. Therefore, the following `recipe' will be used.
\begin{enumerate}
\item Transfer smoothness as given in terms of~$L^{-1}$ to smoothness in terms of~$\lp$, and
\item Knowing the decay of the singular numbers of the operator~$\tp$ inherent in Assumption~\ref{poly.decay}, find the decay of the singular numbers of~$\lp$.
\end{enumerate}
In order to keep the analysis simple and transparent we confine to power type smoothness~$\theta(t)=t^{r},\ 0 < r \leq q$ in Assumption~\ref{source.cond}, as well as to power type link in Assumption~\ref{ass:link} with~$\varrho(t) := t^{a}$ for some~$a>0$. 

\subsection{Relating smoothness}\label{sec:smooth-relate}

The link condition is crucial, and the subsequent arguments are of interpolation type, applying Heinz Inequality within the present context. To this end, we require that~$q$ is chosen such that~$aq\geq 1/2$. In this case Assumption~\ref{ass:link} yields, by applying Heinz Inequality with exponent~$1/(2aq)\leq 1$ that
$$
\norm{\ip A L^{-1}u}{\LL} = \norm{\tp^{1/2}u}{\HH} \asymp\footnote{We shall suppress the recalculations
of the corresponding constants.} \norm{L^{-\frac 1 {2a}}u}{\HH},\quad u\in\HH.
$$
Letting~$v:= L^{-1}u$ we find that
\begin{equation}
  \label{eq:snul}
\norm{\lp^{1/2} v}{\HH} = \norm{\ip A v}{\LL} \asymp  \norm{L^{-(\frac 1 {2a} -1)}v}{\HH},\quad v\in\HH.
\end{equation}
First, we see from this that~$a < 1/2$, because otherwise~$\lp$ would be continuously invertible. Also, the relation~(\ref{eq:snul}) would allow transferring smoothness~$r$ with respect to~$L^{-1}$ to~$\lp$ as long as~$0 < r \leq \frac 1 {2a} - 1$. In order to treat higher smoothness (in terms of~$L^{-1}$) a lifting condition is unavoidable. This must be consistent with the link from~(\ref{eq:snul}). Thus we look for a factor~$z$ such that~$t^{(\frac 1 {2a} -1)z} = t^{q}$, yielding~$z:= \frac{2aq}{1 - 2a}$.
\begin{assumption}[lifting condition]\label{ass:lift}
  We have that
$$
 \norm{L^{-q}u}{\HH} \asymp \norm{\lp^{\frac{aq}{1-2a}}u}{\HH},\quad u\in\HH.
$$  
\end{assumption}
Having this lifting, and applying Heinz Inequality (with exponent~$r/q$) yields
\begin{equation}
  \label{eq:smoothsnu}
\norm{L^{-r}v}{\HH} \asymp \norm{\lp^{\frac{a r}{1-2a}}v}{\HH},\quad v\in\HH,  
\end{equation}
and a source-wise representation as in Assumption~\ref{source.cond} yields a corresponding source-wise representation with respect to the operator~$\lp$ (with different constant).

\subsection{Relating effective dimensions}\label{sec:effdim-relate}

Here we shall use the following consequence of the link condition in Assumption~\ref{ass:link}. Indeed, by squaring the norms we see that
$$
\scalar{L^{-2q}u}{u} \asymp \scalar{\tp^{2aq}u}{u},\quad u\in\HH.
$$
The Weyl Monotonicity Theorem~\cite[Cor.~III.2.3]{Bhatia1997} yields that then~$s_{j}(L^{-2q}) \asymp s_{j}(\tp^{2aq}),\ j=1,2,\dots$, or simplified that~$s_{j}(L^{-1}) \asymp s_{j}^{a}(\tp),\ j=1,2,\dots$ by spectral calculus. Here~$s_{j}(L^{-1})$ and~$s_{j}(\tp)$ denote the singular numbers of the operators. Similarly, we obtain from~(\ref{eq:snul}) that~$s_{j}(\lp) \asymp s_{j}^{\frac{1 - 2a}{a}}(L^{-1})$, and a fortiori that~$s_{j}(\lp) \asymp s_{j}^{1 - 2a}(\tp)$.

\subsection{Lower bound}\label{sec:lowerb}

In order to show the optimality of the error bounds  as discussed in Table~\ref{comparision.1}, we shall assure that the decay of the effective dimension cannot be faster than asserted in Assumption~\ref{poly.decay}.
\begin{assumption}
There is a constant~$c>0$ such that the singular numbers of the operator~$\tp$ obey
$$
  s_{j}(\tp) \geq c j^{-1/b},\quad j=1,2,\dots
$$
\end{assumption}
Notice that this yields that~$\mathcal N(\la) \geq c \la^{-b}$, such that this is the limiting case for which Assumption~\ref{poly.decay} holds. The following is reported in~\cite{Blanchard} for the problem~(\ref{Model}): Under smoothness~$r$ with respect to the operator~$\lp$, and with the decay of the singular numbers~$s_{j}(\lp)$ not faster than~$j^{-1/b}$, the optimal rate is of the order~$\paren{\frac{1}{\sqrt{m}}}^{\frac{2r}{2r + b +1}}$.  In the present context, we have to assign~$r\leftarrow \frac{ar}{1-2a}$ and~$b\leftarrow \frac{b}{1-2a}$. This yield a lower bound of the order
$$
\paren{\frac{1}{\sqrt{m}}}^{\frac{2ar/(1 - 2a)}{2ar/(1 - 2a) + b/(1 - 2a) + 1}} = \paren{\frac{1}{\sqrt{m}}}^{\frac{2ar}{2ar + b + 1 - 2a}}.
$$
This corresponds to the upper bound as discussed in the last row of Table~\ref{comparision.1}, and it shows that the rate is of optimal order.

\section{Conclusion}\label{Sec:conclusion}
We summarize the above findings. We investigated regularization in Hilbert scales for the considered inverse problem with general centered noise, which is assumed to obey a Bernstein-type moment condition. This noise condition is not required when the output space is bounded. We analyzed regularization in a Hilbert scale, generated by some unbounded operator~$L$. In order to do so we used a link condition to transfer information from~$L^{-1}$ to~$\tp$, the underlying covariance operator. 

In the main body, we established error bounds in terms of distance functions, which measure the deviation of the regression function to some benchmark smoothness. These error bounds were then specified for smoothness given in terms of solution smoothness with respect to the operator~$L^{-1}$, by bounding the corresponding distance functions. The error estimates are explicitly described as the exponential deviation inequalities in terms of the sample size which holds non-asymptotically in the probabilistic sense. We discussed the convergence rates for both oversmoothing and regular cases under different behavior of the effective dimension in reproducing kernel approach. In particular, optimal convergence rates can be achieved with the appropriate choice of benchmark smoothness and an a-priori parameter choice for the regular case. Although we mainly focused bounding the reconstruction error~$\norm{\fz-\fp}{\HH}$, error estimates of the prediction error~$\norm{\ip A(\fz-\fp)}{\LL}$ can also be derived similarly in terms of sample size using Theorem~\ref{err.upper.bound.gen.k}.  The optimal parameter choice depends on the unknown parameters~$a$,~$b$,~$r$, reflecting the link condition, the decay of the effective dimension, and the solution smoothness. Therefore a data-driven parameter choice may be required to apply the regularization algorithms. This will be a topic of future research. 
 
\appendix

\section{Proof of Proposition~\ref{prop:relation.eff_dim}}\label{sec:lemma-proof}

We start with the following technical result.

\begin{lemma}\label{Lem:tplp}
Suppose that the function~$\varrho$ from the link condition is such that the function~$t\mapsto \paren{\varrho^{2q}}^{-1}(t)$ is operator concave, and that there is some~$n\in\NN$ for which the function~$t\mapsto \varrho^{-1}(t)/t^n$ is concave.  Under Assumption~\ref{ass:link} we have that
\begin{equation*}
\frac{s_j\paren{\tp}}{s_j\paren{\varrho^2(\tp)}} \leq \beta^{n-1} s_j\paren{\lp } \leq \beta^{2n}\frac{s_j\paren{\tp}}{s_j\paren{\varrho^2(\tp)}},\quad j=1,2,\dots
\end{equation*}
\end{lemma}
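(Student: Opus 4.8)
The plan is to collapse the whole comparison to a single two-sided operator inequality between $L^{2}$ and $\varrho^{-2}(\tp)$, and then to read off the singular numbers by monotonicity. The starting observation is that $\tp=\bp^{\ast}\bp=L^{-1}\lp L^{-1}$, whence $\lp=L\tp L$. Setting $W:=\tp^{1/2}L$ one has $\lp=W^{\ast}W$ and $\tp^{1/2}L^{2}\tp^{1/2}=WW^{\ast}$, so that $s_{j}(\lp)=s_{j}(\tp^{1/2}L^{2}\tp^{1/2})$ for every $j$; in particular $W$ is bounded because $\lp=W^{\ast}W$ is. It therefore suffices to compare the bounded positive operator $\tp^{1/2}L^{2}\tp^{1/2}$ with $\tp^{1/2}\varrho^{-2}(\tp)\tp^{1/2}=\tp\varrho^{-2}(\tp)$.

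Next I would convert the link condition into a genuine operator bound. Squaring the first estimate of Proposition~\ref{prop:Heinz} gives, for all $u\in\HH$, that $\scalar{L^{-2}u}{u}{\HH}\le\scalar{\varrho^{2}(\tp)u}{u}{\HH}\le\beta^{2}\scalar{L^{-2}u}{u}{\HH}$, i.e.\ $L^{-2}\le\varrho^{2}(\tp)\le\beta^{2}L^{-2}$ as bounded operators on $\HH$. Since $t\mapsto 1/t$ is operator monotone decreasing, inverting this chain yields $\varrho^{-2}(\tp)\le L^{2}\le\beta^{2}\varrho^{-2}(\tp)$ in the form sense. Conjugating by $\tp^{1/2}$, and using that $\tp^{1/2}L^{2}\tp^{1/2}=WW^{\ast}$ is bounded, produces the bounded-operator sandwich
\begin{equation*}
  \tp\varrho^{-2}(\tp)\le \tp^{1/2}L^{2}\tp^{1/2}\le \beta^{2}\,\tp\varrho^{-2}(\tp).
\end{equation*}
Weyl's Monotonicity Theorem (already invoked in the paper) then gives $s_{j}(\tp\varrho^{-2}(\tp))\le s_{j}(\lp)\le \beta^{2}s_{j}(\tp\varrho^{-2}(\tp))$. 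Finally, because $\varrho^{2}$ is sub-linear the map $t\mapsto t/\varrho^{2}(t)$ is nondecreasing, so spectral calculus gives $s_{j}(\tp\varrho^{-2}(\tp))=s_{j}(\tp)/\varrho^{2}(s_{j}(\tp))=s_{j}(\tp)/s_{j}(\varrho^{2}(\tp))$. Combining these and using $\beta\ge 1$, $n\ge 1$ (so $1\le\beta^{n-1}$ and $\beta^{2}\le\beta^{n+1}\le\beta^{2n}$) delivers the asserted two-sided bound, in fact with the sharper constants $1$ and $\beta^{2}$.

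The step I expect to be delicate is the rigour of the inversion when the inverted operators $L^{2}$ and $\varrho^{-2}(\tp)$ are unbounded: the inequality $\varrho^{-2}(\tp)\le L^{2}\le\beta^{2}\varrho^{-2}(\tp)$ must be read as an inequality of quadratic forms, and one has to verify that the two forms share a common domain before conjugating by $\tp^{1/2}$. This follows from the bounded estimate $L^{-2}\le\varrho^{2}(\tp)\le\beta^{2}L^{-2}$, which forces $\range(L^{-1})=\range(\varrho(\tp))$, the range identity already recorded after Assumption~\ref{ass:link}, hence $\DD(L)=\DD(\varrho^{-1}(\tp))$. A route that avoids unbounded operators altogether is to argue purely with singular-number calculus for functions of $\tp$; this is presumably where the operator concavity of $(\varrho^{2q})^{-1}$ and the concavity of $\varrho^{-1}(t)/t^{n}$ are used, and it is that less economical argument which produces the stated constants $\beta^{n-1}$ and $\beta^{2n}$ in place of $1$ and $\beta^{2}$.
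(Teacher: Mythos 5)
Your argument is correct, and it takes a genuinely different and in fact more economical route than the paper. The paper starts from the $q$-th power form of the link condition, $(L^{-1})^{2q}\le\varrho^{2q}(\tp)\le(\beta L^{-1})^{2q}$, applies the operator concave function $(\varrho^{2q})^{-1}$ to get $\varrho^{-1}(L^{-1})\le\tp\le\varrho^{-1}(\beta L^{-1})$, conjugates by $L$, and then needs two further steps --- a Heinz/Weyl comparison of $s_j(\tp)$ with $s_j(L^{-1})$, and the concavity of $\varrho^{-1}(t)/t^{n}$ to pull the factor $\beta$ out of the function argument --- which is exactly where the constants $\beta^{n-1}$ and $\beta^{2n}$ originate. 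You instead invert the first-power comparison $L^{-2}\le\varrho^{2}(\tp)\le\beta^{2}L^{-2}$ using only the operator anti-monotonicity of $t\mapsto t^{-1}$ (rigorously, via $(\cdot+\epsilon)^{-1}$ and a monotone limit, since these inverses are unbounded), and then conjugate by $\tp^{1/2}$ so that both comparison operators stay functions of $\tp$; this bypasses the operator concavity of $(\varrho^{2q})^{-1}$ and the concavity of $\varrho^{-1}(t)/t^{n}$ entirely and yields the sharper constants $1$ and $\beta^{2}$, which imply the stated bound because $\beta\ge1$ and $n\ge1$. The one point you should make explicit is that the conjugation requires $\range(\tp^{1/2})\subset\DD(L)$, not merely that the two quadratic forms share the common domain $\DD(L)=\DD(\varrho^{-1}(\tp))$: this follows from the sub-linearity of $\varrho^{2}$, which gives $\tp\le\frac{\kappa^{2}}{\varrho^{2}(\kappa^{2})}\varrho^{2}(\tp)$ and hence $\range(\tp^{1/2})\subset\range(\varrho(\tp))=\range(L^{-1})$ by the Douglas range inclusion; the same bound shows $W^{*}=L\tp^{1/2}$ is everywhere defined and bounded. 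With that supplied, the identities $W^{*}W=\lp$ and $WW^{*}=\tp^{1/2}L^{2}\tp^{1/2}$, the equality of their nonzero singular values, and the evaluation $s_{j}(\tp\varrho^{-2}(\tp))=s_{j}(\tp)/s_{j}(\varrho^{2}(\tp))$ via the nondecreasing function $t\mapsto t/\varrho^{2}(t)$ are all in order. Since the two concavity hypotheses enter the paper only through this lemma (and thence Proposition~\ref{prop:relation.eff_dim}), your proof shows they could in principle be dispensed with, at the price only of renaming the constants downstream.
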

\begin{proof}
The proof is based on two consequences of Assumption~\ref{ass:link}, which, in terms of the partial ordering for self-adjoint operators in Hilbert space can be restated as  
\begin{equation*}
\scalar{(L^{-1})^{2q}u}{u}{\HH} \leq \scalar{\varrho^{2q}(\tp)u}{u}{\HH} \leq\scalar{(\beta L^{-1})^{2q}u}{u}{\HH},\quad u\in\HH. 
\end{equation*}
Applying the operator concave function~$t\mapsto \paren{\varrho^{2q}}^{-1}(t)$ respects the partial ordering, and we obtain\footnote{we use that~$\paren{\varrho^{2q}}^{-1}(t^{2q}) =\paren{\varrho}^{-1}(t)$.} that
\begin{equation*}
\scalar{\varrho^{-1}(L^{-1})u}{u}{\HH} \leq \scalar{\tp u}{u}{\HH} \leq\scalar{\varrho^{-1}(\beta L^{-1})u}{u}{\HH}.
\end{equation*}
Letting~$u:= Lv\in\HH$, and since by construction~$\tp = L^{-1}\lp L^{-1}$ we deduce that
\begin{equation*}
\scalar{\varrho^{-1}(L^{-1})L^2 v}{v}{\HH} \leq \scalar{\lp v}{v}{\HH} \leq \scalar{\varrho^{-1}(\beta L^{-1})L^2 v}{v}{\HH},\quad v\in\DD(L).
\end{equation*}
The sub-linearity of~$\varrho^{2}$ implies that the function~$t\mapsto \varrho^{-1}(t)/t^{2}$ is non-decreasing, such that the operator~$\varrho^{-1}(\beta L^{-1})L^2$ is bounded, and hence the above inequality extends to~$v\in\HH$. Next we apply the Weyl Monotonicity Theorem~\cite[Cor.~III.2.3]{Bhatia1997} to see that
\begin{equation}\label{eq.sing1}
\frac{s_j\paren{\varrho^{-1}(L^{-1})}}{s_j^2(L^{-1})} \leq s_j\paren{\lp } \leq \frac{s_j\paren{\varrho^{-1}(\beta L^{-1})}}{s_j^2(L^{-1})},\quad j=1,2,\dots 
\end{equation}
Applying this theorem to the first inequality in Proposition~\ref{prop:Heinz} we also find that
\begin{equation*}
s_j\paren{\varrho^{-1}(L^{-1})} \leq s_j\paren{\tp } \leq s_j\paren{\varrho^{-1}(\beta L^{-1})},\ j=1,2,\dots 
\end{equation*}
To proceed we shall use that the sub-linearity of the function~$ \varrho^2$ and the concavity of the function~$\varsigma(t):=\varrho^{-1}(t)/t^n$. This yields that~$\varsigma(\beta t)\leq \beta\varsigma(t),\ \beta\geq 1$ and overall, we find that
 \begin{equation*}
 \frac{s_j\paren{\varrho^{-1}(L^{-1})}}{s_j^2(L^{-1})} \leq \frac{s_j\paren{\tp}}{s_j\paren{\varrho^2(\tp)}} \leq \frac{s_j\paren{\varrho^{-1}(\beta L^{-1})}}{s_j^2(\beta L^{-1})}  = \beta^{n-2} s_j^{n-2}(L^{-1})\frac{s_j\paren{\varrho^{-1}(\beta L^{-1})}}{s_j^n(\beta L^{-1})} \leq \beta^{n-1}\frac{s_j\paren{\varrho^{-1}(L^{-1})}}{s_j^2(L^{-1})}.
 \end{equation*}
This, together with the inequalities~(\ref{eq.sing1}) gives
\begin{equation*}
\frac{s_j\paren{\tp}}{s_j\paren{\varrho^2(\tp)}} \leq \beta^{n-1} s_j\paren{\lp } \leq \beta^{2n}\frac{s_j\paren{\tp}}{s_j\paren{\varrho^2(\tp)}},
\end{equation*}
and the proof is complete.
\end{proof}

\begin{proof}[Proof of Proposition~\ref{prop:relation.eff_dim}]
Since the function~$t\mapsto t/\varrho^{2}(t)$ is assumed to be an index function, we find from Lemma~\ref{Lem:tplp} that the implication
$$
\beta^{n+1}\frac{\la}{\varrho^2(\la)} \leq s_{j}(\lp) \quad \text{implies} \quad \la \leq s_{j}(\tp) 
$$
holds true. This yields
\begin{equation}\label{tplp.relation}
\#\brac{j, \quad s_{j}(\lp) \geq \beta^{n+1}\frac{\la}{\varrho^2(\la)}} \leq \#\brac{j,\quad s_{j}(\tp) \geq \la},\quad \la \leq \norm{\tp}{\mathcal L(\HH)}.
\end{equation}

As a consequence of~\cite[Prop.~6]{Lin2015} there is~$\widetilde C$ such that
\begin{equation*}
\NL(\la)\leq \widetilde{C}\#\brac{j, \quad s_j(\lp)\geq \la}.
\end{equation*}
This, together with~\eqref{tplp.relation}, implies that
\begin{align*}
\NL\paren{\beta^{n+1}\frac{\la}{\varrho^2(\la)}} \leq & \widetilde{C}\#\brac{j, \quad s_j(\lp)\geq \beta^{n+1}\frac{\la}{\varrho^2(\la)}}\leq \widetilde{C}\#\brac{j, \quad s_j(\tp)\geq \la} \\
= & 2\widetilde{C}\sum\limits_{s_j(\tp)\geq \la}\frac{1}{2} \leq 2\widetilde{C}\sum\limits_{j=1}^\infty \frac{s_j(\tp)}{\la+s_j(\tp)} = 2\widetilde{C}\NT(\la),\quad \la \leq \norm{\tp}{\mathcal L(\HH)}.
\end{align*}
Since the function~$\la\mapsto \la \NL(\la)~$ is non-decreasing we continue to bound
\begin{equation}\label{relation.eff_dim}
 \NL\paren{\frac{\la}{\varrho^2(\la)}} \leq \beta^{n+1}\NL\paren{\beta^{n+1}\frac{\la}{\varrho^2(\la)}} \leq 2\beta^{n+1}\widetilde{C}\NT(\la),\quad \la \leq \norm{\tp}{\mathcal L(\HH)},
\end{equation}
which completes the proof.
\end{proof}

\section{Probabilistic bounds}\label{Sec:Prob.bound}

In the following proposition, we present the standard perturbation inequalities in learning theory which measures the effect of random sampling in the probabilistic sense. The following two propositions can be proved using the arguments given in Step 2.1. of \cite[Thm.~4]{Caponnetto}.

\begin{proposition}\label{main.bound}
Suppose Assumptions~\ref{fp}--\ref{assmpt1} hold true, then for~$m \in \NN$ and~$0<\eta<1$, each of the following estimate holds with the confidence~$1-\eta$,
\begin{equation*}
\Psi = \Psi(\la) := \norm{(\tp+\la I)^{-1/2}\bx^*(\yy-\sx A(\fp))}{\HH} \leq 2\paren{\frac{\kappa M}{m\sqrt{\la}}+\sqrt{\frac{\Sigma^2\NT(\la)}{m}}}\log\left(\frac{2}{\eta}\right),
\end{equation*}

\begin{equation*}
\Upsilon= \Upsilon(\la): =\norm{(\tp+\la I)^{-1/2}(\tp-\tx)}{HS} \leq 2\paren{\frac{\kappa^2}{m\sqrt{\la}}+\sqrt{\frac{\kappa^2\NT(\la)}{m}}}\log\left(\frac{2}{\eta}\right),
\end{equation*}

\begin{equation*}
\norm{\tp-\tx}{HS} \leq 2\paren{\frac{\kappa^2}{m}+\frac{\kappa^2}{\sqrt{m}}}\log\left(\frac{2}{\eta}\right)
\end{equation*}
and
\begin{equation*}
\Lambda = \Lambda(\la) := \norm{(\lp+\la I)^{-1/2}(\lx-\lp)}{HS}\leq 2\left(\frac{\tilde{\kappa}^2}{m\sqrt{\la}}+\sqrt{\frac{\tilde{\kappa}^2\NL(\la)}{m}}\right)\log\left(\frac{2}{\eta}\right).
\end{equation*}
\end{proposition}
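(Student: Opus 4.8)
The plan is to treat all four estimates by a single device: each of $\Psi$, $\Upsilon$, $\norm{\tp-\tx}{HS}$ and $\Lambda$ is the norm of a \emph{centered empirical average} of independent, identically distributed random elements, and to each such average I would apply the Bernstein-type concentration inequality for Hilbert-space valued random variables (as in Step~2.1 of \cite[Thm.~4]{Caponnetto}). Recall its form: if $\xi_1,\dots,\xi_m$ are i.i.d.\ with values in a separable Hilbert space, $\mathbb{E}\xi=0$, and $\mathbb{E}\norm{\xi}{}^{n}\le\frac{1}{2}n!\,B^2H^{n-2}$ for all $n\ge2$, then with confidence $1-\eta$ one has $\norm{\frac1m\sum_{i=1}^m\xi_i}{}\le2\paren{\frac{H}{m}+\sqrt{\frac{B^2}{m}}}\log\paren{\frac{2}{\eta}}$. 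Thus for each quantity it suffices to (i) exhibit it as $\norm{\frac1m\sum_i\xi_i}{}$ with $\mathbb{E}\xi=0$, (ii) produce the uniform bound $H$, and (iii) produce the second-moment bound $B^2$. The recurring algebraic input is the covariance identity $\int_XK_xK_x^*\,d\nu(x)=\ip^*\ip$, together with the factorizations $\tp=L^{-1}A^*\ip^*\ip AL^{-1}=\bp^*\bp$ and $\lp=A^*\ip^*\ip A$, which are precisely what converts a second moment into an effective dimension.

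For $\Psi$ I would use $\bx^*=L^{-1}A^*\sx^*$ and $\sx^*\yy=\frac1m\sum_iK_{x_i}y_i$ to write $\bx^*(\yy-\sx A\fp)=\frac1m\sum_iL^{-1}A^*K_{x_i}\paren{y_i-A(\fp)(x_i)}$, and set $\xi_i:=(\tp+\la I)^{-1/2}L^{-1}A^*K_{x_i}\paren{y_i-A(\fp)(x_i)}$. By Assumption~\ref{fp} the conditional mean of $y_i-A(\fp)(x_i)$ given $x_i$ vanishes, hence $\mathbb{E}\xi_i=0$. For the uniform bound I would combine $\norm{(\tp+\la I)^{-1/2}}{\lh}\le\la^{-1/2}$ with $\norm{L^{-1}A^*K_x}{HS}\le\norm{AL^{-1}}{\HH\to\HH'}\norm{K_x}{HS}\le\kappa$ to get $\norm{\xi}{\HH}\le\frac{\kappa}{\sqrt\la}\norm{y-A(\fp)(x)}{Y}$; the noise condition (Assumption~\ref{noise.cond}) then supplies $\mathbb{E}\norm{y-A(\fp)(x)}{Y}^{n}\le\frac12 n!\,\Sigma^2M^{n-2}$, so that $H=\kappa M/\sqrt\la$. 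For the variance I would compute $\mathbb{E}\norm{\xi}{\HH}^2\le\Sigma^2\,\mathbb{E}_x\tr\paren{(\tp+\la I)^{-1}L^{-1}A^*K_xK_x^*AL^{-1}}=\Sigma^2\tr\paren{(\tp+\la I)^{-1}\tp}=\Sigma^2\NT(\la)$, using the covariance identity. The inequality above then delivers exactly the stated bound.

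The three operator estimates follow the same template, now with values in the Hilbert--Schmidt class. Writing $\tx=\frac1m\sum_iL^{-1}A^*K_{x_i}K_{x_i}^*AL^{-1}$, I would put $\eta_i:=(\tp+\la I)^{-1/2}L^{-1}A^*K_{x_i}K_{x_i}^*AL^{-1}$, so that $(\tp+\la I)^{-1/2}(\tp-\tx)=\mathbb{E}\eta-\frac1m\sum_i\eta_i$; factoring $\norm{\eta}{HS}\le\norm{(\tp+\la I)^{-1/2}L^{-1}A^*K_x}{HS}\norm{K_x^*AL^{-1}}{\lh}$ gives $H=\kappa^2/\sqrt\la$ and, after taking the trace as above, $B^2=\kappa^2\NT(\la)$. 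Dropping the factor $(\tp+\la I)^{-1/2}$ yields the bound on $\norm{\tp-\tx}{HS}$ with $H=B=\kappa^2$, while the same computation with $A^*K_{x_i}K_{x_i}^*A$, the preconditioner $(\lp+\la I)^{-1/2}$, the bound $\norm{A^*K_x}{HS}\le\tilde\kappa$, and the factorization $\lp=A^*\ip^*\ip A$ produces the $\Lambda$ estimate with $H=\tilde\kappa^2/\sqrt\la$ and $B^2=\tilde\kappa^2\NL(\la)$.

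The operator-norm and Hilbert--Schmidt estimates are routine; the two genuinely substantive points, which I expect to be the main obstacle, are (a) verifying the factorial (sub-exponential) moment condition demanded by the Bernstein inequality --- immediate for the bounded operator variables, but for $\Psi$ requiring the Bernstein-type noise condition of Assumption~\ref{noise.cond} to control $\mathbb{E}\norm{y-A(\fp)(x)}{Y}^{n}$ --- and (b) the precise identification of each second moment with $\NT(\la)$ or $\NL(\la)$ via the covariance identity and the factorizations of $\tp$ and $\lp$. Once these are in place, matching the constants and the $\log(2/\eta)$ factor to the stated form is only bookkeeping.
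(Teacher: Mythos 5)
Your proposal is correct and follows exactly the route the paper intends: the paper gives no written proof of Proposition~\ref{main.bound}, only the remark that it follows from Step~2.1 of \cite[Thm.~4]{Caponnetto}, and your argument is precisely that Bernstein-type concentration scheme, with the right identification of the centered i.i.d.\ summands, the sup-bounds $H$, and the second moments via $\int_X K_xK_x^*\,d\nu=\ip^*\ip$ yielding $\NT(\la)$ resp.\ $\NL(\la)$. No gaps beyond the bookkeeping you already flag.
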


In the following proposition, the probabilistic estimate of the first term can be established under the condition~\eqref{l.la.condition} on the regularization parameter~$\la$ and sample size~$m$. Then we obtain the last two estimates using \cite[Prop.~A.2]{Blanchard2019}.

\begin{proposition}\label{I1}
Suppose Assumption~\ref{assmpt1} and the condition~\eqref{l.la.condition} hold true. Let~$\zeta : \RR^+ \to \RR^+$ be a nondecreasing and sub-linear function, then for~$m \in \NN$ and~$0<\eta<1$, each of the following estimates hold with the confidence~$1-\eta$,

\begin{equation*}
\Upsilon= \norm{(\tp+\la I)^{-\frac{1}{2}}(\tp-\tx)}{HS}\leq \sqrt{\la}2\kappa(2\kappa+1)\log\paren{\frac{2}{\eta}},
\end{equation*}

\begin{align*}
\Xi^s = \Xi^s(\la) :=\norm{(\tx+\la I)^{-s}(\tp+\la I)^s}{\lh}\leq  \paren{\frac{\Upsilon}{\sqrt{\la}}+1}^{2s} \leq & \paren{(2\kappa+1)^2\log\paren{\frac{2}{\eta}}}^{2s} 
\end{align*}
for~$0\leq s \leq 1$ and
\begin{align*}
\Xi^\zeta = \Xi^\zeta(\la) := \norm{\paren{\frac{1}{\zeta}}(\tx+\la I)\zeta(\tp+\la I)}{\lh} \leq & \paren{\frac{\Upsilon}{\sqrt{\la}}+1}^2 
\leq  \paren{(2\kappa+1)^2\log\paren{\frac{2}{\eta}}}^{2}.
\end{align*}
\end{proposition}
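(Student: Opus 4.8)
The plan is to establish the three displayed estimates successively: the bound on $\Upsilon$ follows from the raw concentration inequality of Proposition~\ref{main.bound} combined with the coupling~\eqref{l.la.condition}, and this bound then drives the two operator-ratio estimates $\Xi^{s}$ and $\Xi^{\zeta}$ through a \emph{deterministic} perturbation argument. For the first estimate I would start from the inequality recorded in Proposition~\ref{main.bound}, namely $\Upsilon\le 2\paren{\kappa^{2}/(m\sqrt{\la})+\sqrt{\kappa^{2}\NT(\la)/m}}\log(2/\eta)$, valid with confidence $1-\eta$. Condition~\eqref{l.la.condition}, i.e.\ $\NT(\la)\le m\la$, collapses the variance term to $\sqrt{\kappa^{2}\NT(\la)/m}\le\kappa\sqrt{\la}$, while the bias term is handled by the elementary consequence $m\la\ge\tfrac{1}{2}$ (which holds since $m\la\ge\NT(\la)\ge\tfrac{1}{2}$ in the relevant range $\la\le\norm{\tp}{\lh}$), giving $\kappa^{2}/(m\sqrt{\la})=\kappa^{2}\sqrt{\la}/(m\la)\le2\kappa^{2}\sqrt{\la}$. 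Summing the two contributions yields $\Upsilon\le\sqrt{\la}\,2\kappa(2\kappa+1)\log(2/\eta)$; in particular $\Upsilon/\sqrt{\la}\le2\kappa(2\kappa+1)\log(2/\eta)$, which is precisely what the remaining estimates consume.

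For $\Xi^{s}$ the heart of the matter is the deterministic inequality $\Xi^{1}\le(\Upsilon/\sqrt{\la}+1)^{2}$. Setting $\Delta:=(\tp+\la I)^{-1/2}(\tp-\tx)(\tp+\la I)^{-1/2}$, one has $\norm{\Delta}{\lh}\le\Upsilon/\sqrt{\la}$ and $(\tp+\la I)^{-1/2}(\tx+\la I)(\tp+\la I)^{-1/2}=I-\Delta$, so $\Xi^{1}=\norm{(\tx+\la I)^{-1}(\tp+\la I)}{\lh}$ is controlled by the top eigenvalue of $\Delta$. On the joint spectrum this reads $\Xi^{1}=\max_{j}(t_{j}+\la)/(\hat t_{j}+\la)=:K$, and since the maximizing index satisfies $t_{j}+\la\ge K\la$, its Hilbert--Schmidt weight forces $\Upsilon^{2}/\la\ge K-2+1/K$, whence $(\Upsilon/\sqrt{\la}+1)^{2}\ge K$; the non-commuting version of this is exactly \cite[Prop.~A.2]{Blanchard2019}. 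Given $\Xi^{1}\le(\Upsilon/\sqrt{\la}+1)^{2}$, the Cordes inequality $\norm{P^{-s}Q^{s}}{\lh}\le\norm{P^{-1}Q}{\lh}^{s}$ for $0\le s\le1$, applied with $P=\tx+\la I$ and $Q=\tp+\la I$, gives $\Xi^{s}\le(\Xi^{1})^{s}\le(\Upsilon/\sqrt{\la}+1)^{2s}$. Substituting the bound from the previous paragraph and absorbing constants (using that $\kappa$ and $\log(2/\eta)$ are bounded below, so that $\Upsilon/\sqrt{\la}+1\le(2\kappa+1)^{2}\log(2/\eta)$) produces the final factor $\paren{(2\kappa+1)^{2}\log(2/\eta)}^{2s}$, with confidence $1-\eta$ inherited from the $\Upsilon$ event.

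For a nondecreasing sub-linear $\zeta$ I would reduce $\Xi^{\zeta}$ to the case $s=1$. Sub-linearity of $\zeta$ means $\zeta(a)/\zeta(b)\le a/b$ whenever $a\ge b$, so on each joint spectral component the ratio $\zeta(t_{j}+\la)/\zeta(\hat t_{j}+\la)$ is dominated by $\max\paren{1,(t_{j}+\la)/(\hat t_{j}+\la)}$; taking the maximum over $j$ gives $\Xi^{\zeta}\le\max(1,\Xi^{1})\le(\Upsilon/\sqrt{\la}+1)^{2}$, made rigorous in the operator setting again by \cite[Prop.~A.2]{Blanchard2019}. Inserting the $\Upsilon$ bound then yields the stated $\paren{(2\kappa+1)^{2}\log(2/\eta)}^{2}$.

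The step I expect to be delicate is precisely the passage from $\Upsilon$ to $\Xi^{1}$ and $\Xi^{\zeta}$. Here $\Upsilon$ measures a \emph{$\tp$-preconditioned} Hilbert--Schmidt perturbation, whereas $\Xi$ is an operator-norm ratio in which the empirical operator $\tx$ occupies the resolvent; transferring the preconditioning from $\tp$ to $\tx$ by a naive Neumann/triangle argument only produces the conditional bound $(1-\Upsilon/\sqrt{\la})^{-1}$, which requires $\Upsilon/\sqrt{\la}<1$ and blows up as $\Upsilon/\sqrt{\la}\to1$. The unconditional $(\Upsilon/\sqrt{\la}+1)^{2}$ form of \cite[Prop.~A.2]{Blanchard2019} avoids this, and the structural reason it can is that a near-critical top eigenvalue of $\Delta$ automatically inflates $\Upsilon$ (because the associated spectral value of $\tp$ then exceeds $K\la$), so $\Upsilon/\sqrt{\la}$ and $\Xi^{1}$ cannot decouple. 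Making this coupling rigorous without commutativity is the crux of the estimate; everything else is bookkeeping of constants and the insertion of the high-probability bound on $\Upsilon$.
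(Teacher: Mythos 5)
Your proposal is correct and follows essentially the same route as the paper: the bound on $\Upsilon$ is obtained by inserting condition~\eqref{l.la.condition} into the concentration inequality of Proposition~\ref{main.bound}, and the two operator-ratio estimates are delegated to the deterministic perturbation bound of \cite[Prop.~A.2]{Blanchard2019}, exactly as the paper indicates. The extra detail you supply (the arithmetic producing the constant $2\kappa(2\kappa+1)$ from $m\la\geq 1/2$, the interpolation to exponents $0\leq s\leq 1$, and the reduction of $\Xi^{\zeta}$ to $\Xi^{1}$ via sub-linearity) is consistent with the stated constants.
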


\begin{lemma}\label{eq:lemma}
Suppose Assumption~\ref{ass:link} holds true. Let~$\ga$ be any regularization with residual function~$\ra$. Then for~$\upsilon(t)=t/\varrho(t)$, we have that
  \begin{equation}
\norm{L^{-1}\ra(\tx) L}{\lh} \leq 1+(B + D) \paren{\Xi^\varrho  \Xi^\upsilon   + \Xi \varrho(\la)(\varrho(\la)+1)\frac{\Lambda}{\sqrt{\la}}}.
  \end{equation}
\end{lemma}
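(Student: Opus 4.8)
The plan is to reduce everything to a bound on $L^{-1}\tx\ga(\tx)L$ and then split this into a \emph{population} contribution and a \emph{sampling} contribution. First I would use $\ra(t)=1-t\ga(t)$ to write
\[
L^{-1}\ra(\tx)L = I - L^{-1}\tx\ga(\tx)L,
\]
so that $\norm{L^{-1}\ra(\tx)L}{\lh}\le 1 + \norm{L^{-1}\tx\ga(\tx)L}{\lh}$; this already accounts for the leading $1$. Using that $\tx=L^{-1}\lx L^{-1}$, that functions of $\tx$ commute, and the identity $\tx L = L^{-1}\lx$, I would rewrite $L^{-1}\tx\ga(\tx)L = L^{-1}\ga(\tx)L^{-1}\lx$. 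Inserting $\lx=\lp+(\lx-\lp)$ then splits the remaining term into $L^{-1}\ga(\tx)L^{-1}\lp$ and $L^{-1}\ga(\tx)L^{-1}(\lx-\lp)$, which I expect to produce the summands $\Xi^\varrho\Xi^\upsilon$ and $\Xi\,\varrho(\la)(\varrho(\la)+1)\Lambda/\sqrt\la$, respectively.

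For the population term I would exploit the link condition through Proposition~\ref{prop:Heinz}. Writing $L^{-1}\lp = \upsilon(\tp)\,\varrho(\tp)L$ with $\upsilon(t)=t/\varrho(t)$ (so that $\varrho(t)\upsilon(t)=t$), the factor $\varrho(\tp)L$ is bounded because $\norm{\varrho(\tp)Lf}{\HH}\le\beta\norm{f}{\HH}$, while the leftmost $L^{-1}$ is absorbed via $\norm{L^{-1}w}{\HH}\le\norm{\varrho(\tp)w}{\HH}$. Passing from $\tp$ to $\tp+\la I$ using the monotonicity of $\varrho$ and $\upsilon$, I would peel off the blocks $\varrho(\tp+\la I)\varrho(\tx+\la I)^{-1}$ and $\upsilon(\tx+\la I)^{-1}\upsilon(\tp+\la I)$, i.e.\ the quantities $\Xi^\varrho$ and $\Xi^\upsilon$, leaving the scalar multiplier $\tsup\varrho(t+\la)\upsilon(t+\la)\abs{\ga(t)}=\tsup(t+\la)\abs{\ga(t)}$. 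The sum structure $B+D$ arises exactly here, since $(t+\la)\abs{\ga(t)}\le\tsup\abs{t\ga(t)}+\la\,\tsup\abs{\ga(t)}\le D+B$.

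The perturbation term is the delicate one. I would first factor out $\Lambda$ by inserting $(\lp+\la I)^{\pm1/2}$,
\[
\norm{L^{-1}\ga(\tx)L^{-1}(\lx-\lp)}{\lh}\le\norm{L^{-1}\ga(\tx)L^{-1}(\lp+\la I)^{1/2}}{\lh}\,\Lambda .
\]
To control the remaining operator without the unbounded $L$ spoiling the powers of $\la$, I would pass to the adjoint and use two form estimates that follow from the link condition with constant one: first $\norm{(\lp+\la I)^{1/2}L^{-1}v}{\HH}^2=\scalar{(\tp+\la L^{-2})v}{v}{\HH}\le\scalar{(\tp+\la\varrho^2(\tp))v}{v}{\HH}$ (because $L^{-2}\le\varrho^2(\tp)$), and then the pointwise subadditivity $(\tp+\la\varrho^2(\tp))^{1/2}\le\tp^{1/2}+\sqrt\la\,\varrho(\tp)$. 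This reduces matters to bounding $\norm{\varrho(\tp)\ga(\tx)\tp^{1/2}}{\lh}$ and $\sqrt\la\,\norm{\varrho(\tp)\ga(\tx)\varrho(\tp)}{\lh}$. In each I would insert $(\tp+\la I)^{\mp1/2}$, invoke the estimate $\norm{\varrho(\tp)(\tp+\la I)^{-1/2}}{\lh}\le\varrho(\la)/\sqrt\la$ from~\eqref{eq:rho12-bound}, and split off a factor $\Xi$ through $\norm{(\tx+\la I)^{-1/2}(\tp+\la I)^{1/2}}{\lh}$, with the central factor bounded by $\norm{(\tx+\la I)\ga(\tx)}{\lh}\le B+D$. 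The first piece yields $\Xi\,\varrho(\la)/\sqrt\la$ and the second $\Xi\,\varrho^2(\la)/\sqrt\la$, which combine to $\Xi\,\varrho(\la)(\varrho(\la)+1)/\sqrt\la$.

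The main obstacle is precisely this last reduction: $L$ is unbounded, so one cannot commute it past $\ga(\tx)$, and crude norm estimates destroy the balance of $\la$-powers needed for the factor $\varrho(\la)(\varrho(\la)+1)/\sqrt\la$. The remedy is to route every occurrence of $L^{-1}$ either into a constant-one Heinz comparison ($L^{-1}\leftrightarrow\varrho(\tp)$ and $L^{-2}\le\varrho^2(\tp)$) or into the bounded block $\varrho(\tp)L$, and to introduce the regularizations $\tp+\la I$ and $\tx+\la I$ only at the stage where the $\Xi$-type quantities and~\eqref{eq:rho12-bound} can be applied. I would track the constants $\beta$ produced in the population term and absorb them into the factor $B+D$, which is harmless since $\beta\ge 1$.
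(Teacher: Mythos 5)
Your proposal is correct, and its skeleton coincides with the paper's: the same starting identity
\begin{equation*}
L^{-1}\ra(\tx)L \;=\; I - L^{-1}\ga(\tx)\tp L + L^{-1}\ga(\tx)L^{-1}(\lp-\lx),
\end{equation*}
and essentially the same treatment of the population term, factored through the blocks $\Xi^\varrho$, $\Xi^\upsilon$, $\norm{\ga(\tx)(\tx+\la I)}{\lh}\leq B+D$, and the two link-condition bounds for $\norm{L^{-1}\tfrac{1}{\varrho}(\tp+\la I)}{\lh}$ and $\norm{\varrho(\tp)L}{\lh}$. Where you genuinely diverge is the perturbation term. The paper first splits $\lp-\lx$ via the resolvent identity $I=(\la I+\lp)(\lp+\la I)^{-1}$ into the two summands $\la I_2+I_3$ and bounds each separately, reaching $\norm{\varrho(\tp)\ga(\tx)\tp^{1/2}}{\lh}$ for $I_3$ through the factorization $L^{-1}\lp=\bp^{*}\ip A$. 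You instead extract $\Lambda$ in one stroke and control the remaining operator $\norm{(\lp+\la I)^{1/2}L^{-1}\ga(\tx)L^{-1}}{\lh}$ by the quadratic-form identity $\norm{(\lp+\la I)^{1/2}L^{-1}v}{\HH}^2=\scalar{(\tp+\la L^{-2})v}{v}{\HH}$, the form comparison $L^{-2}\leq\varrho^2(\tp)$, and subadditivity of the square root for commuting functions of $\tp$. Both routes land on the same two quantities $\norm{\varrho(\tp)\ga(\tx)\tp^{1/2}}{\lh}$ and $\norm{\varrho(\tp)\ga(\tx)\varrho(\tp)}{\lh}$, hence the same factor $\varrho(\la)(\varrho(\la)+1)/\sqrt{\la}$; your form-inequality argument makes the origin of the two summands $1$ and $\varrho(\la)$ more transparent, at the price of the extra square-root subadditivity step. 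Your remark about tracking $\beta$ is apt: the paper's own proof silently absorbs a factor $\beta$ when claiming $\norm{\varrho(\tp+\la I)(\tp+\la I)^{-1}\tp L}{\lh}\leq 1$, so on this point your version is the more careful one.
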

\begin{proof}
For~$\lx=A^*\sx^*\sx A$ and~$\lp=A^*\ip^*\ip A$ with the fact that~$\tx - \tp = L^{-1} \paren{\lx - \lp} L^{-1}$, the proof will be based on the following decomposition
\begin{align*}
L^{-1}\ra(\tx)L = & I - L^{-1}\ga(\tx)\tp L + L^{-1}\ga(\tx)L^{-1}(\lp-\lx) \\
 = & I-L^{-1}\ga(\tx) \tp L  + \la L^{-1}\ga(\tx)L^{-1}(\lp+\la I)^{-1}\paren{\lp - \lx} \\
&+  L^{-1}\ga(\tx)L^{-1}\lp (\lp+\la I)^{-1}\paren{\lp - \lx},
\end{align*}
and this yields the estimate
  \begin{align}
    \norm{L^{-1} \ra(\tx)L}{\lh}  \leq & 1 + \norm{L^{-1}\ga(\tx) \tp L}{\lh} + \la\norm{L^{-1}\ga(\tx)L^{-1}(\lp+\la I)^{-1}\paren{\lp - \lx}}{\lh} \notag\\
   & + \norm{L^{-1}\ga(\tx)L^{-1}\lp (\lp+\la I)^{-1}\paren{\lp - \lx}}{\lh} \notag\\
    & = 1+ I_{1} + \la I_{2} + I_3.
   \end{align}

We observe that~$\norm{\ga(\tx)(\tx +\la I)}{\lh} \leq B+D$. For the function~$\upsilon(t)=t/\varrho(t)$, we can bound~$I_{1}$ as
\begin{align*}
  \norm{L^{-1}\ga(\tx) \tp L}{\lh} \leq & \norm{L^{-1}\frac{1}{\varrho}\paren{\tp + \la I}}{\lh}\norm{\varrho\paren{\tp + \la I}\frac{1}{\varrho}\paren{\tx +\la I}}{\lh} \norm{\ga(\tx) \paren{\tx +\la I}}{\lh}\\
&\times \norm{ \paren{\tx +\la I}^{-1} \varrho\paren{\tx +\la I} \paren{\tp + \la I} \frac{1}{\varrho}\paren{\tp + \la I}}{\lh}\\                             
&\times\norm{\varrho\paren{\tp + \la I}\paren{\tp + \la I}^{-1}\tp L}{\lh}\\
  & \leq \Xi^\upsilon\Xi^\varrho (B+D) \norm{L^{-1}\frac{1}{\varrho}\paren{\tp+\la I}}{\lh}  \norm{\varrho\paren{\tp + \la I}\paren{\tp + \la I}^{-1}\tp L}{\lh}.
\end{align*}

It remains to bound the second and third factors. From Proposition~\ref{prop:Heinz} we find that
$$
\norm{L^{-1} \frac 1 \varrho(\tp +\la I)}{\lh} \leq \norm{\varrho(\tp) \frac 1 \varrho(\tp +\la I)}{\lh}\leq \norm{\varrho(\tp + \la I) \frac 1 \varrho(\tp+\la I)}{\lh}= 1.
$$

Again, under Assumption~\ref{ass:link} we find that
$$
\norm{\varrho\paren{\tp + \la I}\paren{\tp + \la I}^{-1}\tp L}{\lh} \leq \norm{\varrho\paren{\tp + \la I}\paren{\tp + \la I}^{-1}\tp \frac 1 \varrho(\tp)}{\lh} \leq 1,
$$
which finally yields that~$I_{1} \leq \Xi^\upsilon\Xi^\varrho(B+D)$.

The terms~$I_2$,~$I_3$ can be bounded as
   \begin{align}
I_2 = & \norm{L^{-1}\ga(\tx)L^{-1}(\lp+\la I)^{-1}\paren{\lp - \lx}}{\lh} \\ \nonumber
\leq & \frac{1}{\sqrt{\la}} \norm{L^{-1}\ga(\tx)L^{-1}}{\lh}\norm{(\lp+\la I)^{-1/2}\paren{\lp - \lx}}{\lh} \\ \nonumber
\leq & \frac{1}{\sqrt{\la}} \norm{\varrho(\tp)\ga(\tx)\varrho(\tp)}{\lh}\norm{(\lp+\la I)^{-1/2}\paren{\lp - \lx}}{\lh}  \\ \nonumber
\leq & \frac{1}{\sqrt{\la}} \norm{\varrho(\tp)(\tp+\la I)^{-1/2}}{\lh}^2\norm{(\tp+\la I)(\tx+\la I)^{-1}}{\lh}\norm{\ga(\tx)(\tx+\la I)}{\lh}\\ \nonumber
&\times\norm{(\lp+\la I)^{-1/2}\paren{\lp - \lx}}{\lh} \\ \nonumber
\leq & \frac{\varrho^2(\la)}{{\la}^{3/2}} (B+D)\Xi\norm{(\lp+\la I)^{-1/2}\paren{\lp - \lx}}{\lh}
  \end{align}
and 
   \begin{align}
I_3 = & \norm{L^{-1}\ga(\tx)L^{-1}\lp (\lp+\la I)^{-1}\paren{\lp - \lx}}{\lh} \\ \nonumber
\leq & \norm{L^{-1}\ga(\tx)\bp^*}{\LL\to\HH}\norm{\ip A (\lp+\la I)^{-1}\paren{\lp - \lx}}{\HH\to\LL} \\ \nonumber
= & \norm{L^{-1}\ga(\tx)\tp^{1/2}}{\lh}\norm{\lp^{1/2} (\lp+\la I)^{-1}\paren{\lp - \lx}}{\lh} \\ \nonumber
\leq & \norm{\varrho(\tp)\ga(\tx)\tp^{1/2}}{\lh}\norm{(\lp+\la I)^{-1/2}\paren{\lp - \lx}}{\lh} \\ \nonumber
\leq & \frac{\varrho(\la)}{{\la}^{1/2}} (B+D)\Xi\norm{(\lp+\la I)^{-1/2}\paren{\lp - \lx}}{\lh}.
  \end{align}

This complete the proof.
\end{proof}

\bibliography{library}
\bibliographystyle{plain}
\end{document}